%
%
%
%
\documentclass{amsart}

\newtheorem{theorem}{Theorem}[section]
\newtheorem{lemma}[theorem]{Lemma}

\theoremstyle{definition}

\newtheorem{example}[theorem]{Example}
\newtheorem{proposition}[theorem]{Proposition}
\newtheorem{corollary}[theorem]{Corollary}

\theoremstyle{remark}
\newtheorem{remark}[theorem]{Remark}

\numberwithin{equation}{section}



\begin{document}

\title{Three circles theorems and Liouville type theorems}

\author{Run-Qiang Jian}
\address{School of Computer Science, Dongguan University
of Technology, 1, Daxue Road, Songshan Lake, 523808, Dongguan, P.
R. China}
\email{jianrq@dgut.edu.cn}

\author{Zhu-Hong Zhang}
\address{School of Mathematical Sciences, South China Normal University, Guangzhou, 510631, China}
\email{juhoncheung@sina.com}

\subjclass[2020]{53C21, 53C25}



\keywords{Three circles theorem, Liouville type theorems, gradient shrinking Ricci solitons}

\begin{abstract}
We establish three circles theorems for subharmonic functions on Riemannian manifolds with nonnegative Ricci curvature, as well as on gradient shrinking Ricci solitons with scalar curvature bounded from below by $\frac{n-2}{2}$. We also establish a three circiles theorem for holomorphic functions on gradient shrinking K\"{a}hler-Ricci solitons with some curvature conditions. As applications, we prove some Liouville type theorems.
\end{abstract}

\maketitle

\section{Introduction}

In complex analysis, the Hadamard three circles theorem states that the logarithm of the maximal modulus of a holomorphic function on $\mathbb{C}$ is convex with respect to the logarithm of the radius of a circle. It can be generalized to subharmonic functions on $\mathbb{R}^n$ (see e.g., Theorem 28 and 30 in Chapter 2 of \cite{PW}). During the last decades, classical three circles theorems have been generalized to solutions of partial differential equations (see, e.g., \cite{S}, \cite{CT}, \cite{CM}, and \cite{X}). Recently, in a seminal paper \cite{L}, Liu established a three circles theorem on K\"{a}hler manifolds with nonnegative holomorphic sectional curvature. Later on, Liu's theorem was generalized to shrinking gradient Ricci solitons with constant scalar curvature (\cite{O}), pseudohermitian manifolds (\cite{CHL}),  and almost Hermitian manifolds (\cite{YZ}).

Classical three circles theorems can be formerly used to prove Liouville type theorems. One important application of Liu's three circles theorem and its generalizations is to derive sharp dimension estimates (see \cite{L}, \cite{CHL}, and \cite{YZ}).

The main purpose of the present paper is to generalize three circles theorems to Riemannian manifolds. We first consider the generalization of three circles theorems for subharmonic functions on $\mathbb{R}^n$. An natural and appropriate framework is Riemannian manifolds with nonnegative Ricci curvature. Let $M$ be an $n$-dimensional Riemannian manifold. Given a subharmonic function $u$ on $M$, we denote by $\mathrm{M}_u(r)$ the maximal value of $u$ over a geodesic ball centered at a fixed point $p\in M$ with radius $r$. We show that
\begin{theorem}Suppose that $M$ is a Riemannian manifold with nonnegative Ricci curvature and $u$ is a subharmonic function on $M$. For any $0<r_1<r_2<r_3$,
 if $n=2$, we have \begin{equation*}\mathrm{M}_u(r_2)\leq\frac{\mathrm{M}_u(r_1)(\log r_3-\log r_2)+\mathrm{M}_u(r_3)(\log r_2-\log r_1)}{\log r_3-\log r_1},\end{equation*} and if $n>3$, we have \begin{equation*}\mathrm{M}_u(r_2)\leq\frac{\mathrm{M}_u(r_1)(r_3^{2-n}-r_2^{2-n})+\mathrm{M}_u(r_3)(r_2^{2-n}-r_1^{2-n})}{r_3^{2-n}-r_1^{2-n}}.\end{equation*}
\end{theorem}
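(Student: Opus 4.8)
The plan is to imitate the classical annulus-comparison proof of the Euclidean three circles theorem, with the explicit radial harmonic functions $\log r$ and $r^{2-n}$ replaced by the same expressions in the Riemannian distance $r=d(p,\cdot)$ and the exact computation replaced by the Laplacian comparison theorem. I will assume $M$ is complete (needed for $\mathrm{M}_u(r)$ to be attained and finite); if $M$ is closed the statement is vacuous, since a subharmonic function on a closed manifold is constant, so we may take $M$ noncompact. Fix $0<r_1<r_2<r_3$. Two trivial observations: $\mathrm{M}_u$ is nondecreasing because $\overline{B(p,r)}$ grows with $r$; and by the maximum principle for subharmonic functions on the compact set $\overline{B(p,\rho)}$, one has $\mathrm{M}_u(\rho)=\max_{r=\rho}u$. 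Hence it suffices to bound $u$ on the single sphere $\{r=r_2\}$.

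Write $\phi(r)=\log r$ for $n=2$ and $\phi(r)=r^{2-n}$ for $n\ge3$, and let $h=a+b\,\phi(r)$ be the function of this form with $h\equiv\mathrm{M}_u(r_1)$ on $\{r=r_1\}$ and $h\equiv\mathrm{M}_u(r_3)$ on $\{r=r_3\}$, so that $b=\frac{\mathrm{M}_u(r_3)-\mathrm{M}_u(r_1)}{\phi(r_3)-\phi(r_1)}$. The crux is that $h$ is superharmonic on $M\setminus\{p\}$. Where $r$ is smooth one has $\Delta\phi(r)=\phi''(r)+\phi'(r)\,\Delta r$ since $|\nabla r|\equiv1$; substituting $\phi$ and using $\Delta r\le\frac{n-1}{r}$ gives $\Delta(\log r)\le0$ when $n=2$ and $\Delta(r^{2-n})\ge0$ when $n\ge3$, the sign flipping in the second case because $\phi'(r)=(2-n)r^{1-n}<0$. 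Since $\mathrm{M}_u$ is nondecreasing while $\phi$ is increasing for $n=2$ and decreasing for $n\ge3$, in both cases $b$ carries exactly the sign that makes $b\,\Delta\phi(r)\le0$; hence $\Delta h\le0$.

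It follows that $u-h$ is subharmonic on the annulus $A=\{r_1\le r\le r_3\}$, while $u-h\le0$ on $\partial A$ because $u\le\mathrm{M}_u(r_i)=h$ on $\{r=r_i\}$. The maximum principle on the compact set $A$ then gives $u\le h$ on $A$, so $\mathrm{M}_u(r_2)=\max_{r=r_2}u\le a+b\,\phi(r_2)$. Finally $a+b\,\phi(r_2)$ is the value at $\phi(r_2)$ of the affine function of $\phi$ through $(\phi(r_1),\mathrm{M}_u(r_1))$ and $(\phi(r_3),\mathrm{M}_u(r_3))$, namely $\frac{\mathrm{M}_u(r_1)(\phi(r_3)-\phi(r_2))+\mathrm{M}_u(r_3)(\phi(r_2)-\phi(r_1))}{\phi(r_3)-\phi(r_1)}$, which is exactly the asserted bound in each of the two cases $\phi=\log$ and $\phi(r)=r^{2-n}$.

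The only delicate point is that $r$ is merely Lipschitz across the cut locus of $p$, so "$\Delta h\le0$'' and "$u-h$ subharmonic'' must be read weakly. I would handle this in the standard way: the Laplacian comparison $\Delta r\le\frac{n-1}{r}$ holds in the barrier sense (Calabi's trick), which produces at each point of $A$ (note $p\notin A$ since $r_1>0$, so the singularity of $\phi$ at $r=0$ is irrelevant) a smooth upper barrier for $h$ with nonpositive Laplacian; equivalently $\Delta r\le\frac{n-1}{r}$ holds distributionally, whence $\Delta(u-h)\ge0$ as a distribution and the weak maximum principle applies to $u-h$ on $A$. Apart from this, the argument is routine.
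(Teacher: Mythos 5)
Your argument is correct, and its core is the same as the paper's: compare $u$ on the annulus $\{r_1\le\rho\le r_3\}$ with the affine function of $\Phi(\rho)$ that matches $\mathrm{M}_u(r_1)$ and $\mathrm{M}_u(r_3)$ on the two boundary spheres, and use the Laplacian comparison theorem (with the sign bookkeeping you carried out, so that $b\,\Delta\Phi(\rho)\le 0$ in both cases) to see that this comparison function is superharmonic. Where you diverge is in the treatment of the two technical obstacles, the cut locus and the non-strictness of $\Delta h\le 0$. You dispose of both at once by invoking the barrier/distributional form of Laplacian comparison together with the weak maximum principle for continuous distributionally subharmonic functions. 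The paper instead keeps everything pointwise and self-contained: it perturbs the comparison function to $a_\varepsilon\Phi(\rho-\varepsilon)+b_\varepsilon$ (Liu's trick), which by the same computation has \emph{strictly} negative Laplacian, of size comparable to $\varepsilon$, so that an interior maximum of the difference yields an outright contradiction from the second-derivative test; at a cut point it additionally shifts the base point from $p$ to a nearby point $p_1$ on a minimal geodesic (Calabi's trick), the remaining slack $\varepsilon-\varepsilon_1>0$ guaranteeing that the strict inequality survives the shift, and then lets $\varepsilon\to 0^+$. Your route is shorter but outsources more to standard potential theory; the paper's works entirely with classical pointwise Laplacians of smooth barriers. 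One point worth making explicit if you flesh out the "standard way": for $n\ge 3$ the function $h=a+b\,\rho^{2-n}$ is a \emph{nondecreasing} $C^2$ function of $\rho$ (because $b\le 0$), and this monotonicity is exactly what allows Calabi's smooth upper barrier for $\rho$ at a cut point to push forward to a smooth upper barrier for $h$ there — the same sign flip you noted in the Laplacian computation is what legitimizes the barrier step.
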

As a consequence, we have the following Liouville type theorem.

\begin{corollary}Any bounded subharmonic function on a surface with nonnegative Gauss curvature must be a constant. \end{corollary}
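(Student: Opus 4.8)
The plan is to read off the corollary from Theorem 1.1 in the case $n=2$: that inequality says exactly that $t\mapsto \mathrm{M}_u(e^{t})$ is convex, and a convex function that is bounded above on a half-line must be nonincreasing; combined with the obvious monotonicity of $\mathrm{M}_u$ this will force $\mathrm{M}_u$ to be constant, after which the strong maximum principle gives that $u$ itself is constant.

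I would first dispose of the trivial situations. Since the statement involves geodesic balls of every radius, I work on a complete surface $M$; if $M$ is compact the conclusion is immediate, because a subharmonic function on a closed manifold attains its maximum and is therefore constant by the strong maximum principle. So assume $M$ is complete and noncompact. Then every ball $B(p,r)$ about the fixed point $p$ is precompact by Hopf--Rinow, so $\mathrm{M}_u(r)=\max_{\overline{B(p,r)}}u$ is finite; it is nondecreasing in $r$ by inclusion of the balls, and it is bounded above by $S:=\sup_M u<\infty$ because $u$ is bounded.

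Then I would fix $0<r_1<r_2$ and let $r_3\to\infty$ in the $n=2$ inequality of Theorem 1.1. Since $\mathrm{M}_u(r_3)$ stays in the bounded interval $[\mathrm{M}_u(r_1),S]$ while $\log r_3\to\infty$, we have
\[
\frac{\log r_3-\log r_2}{\log r_3-\log r_1}\longrightarrow 1,\qquad
\frac{\mathrm{M}_u(r_3)\,(\log r_2-\log r_1)}{\log r_3-\log r_1}\longrightarrow 0,
\]
so the right-hand side of that inequality tends to $\mathrm{M}_u(r_1)$, whence $\mathrm{M}_u(r_2)\le \mathrm{M}_u(r_1)$. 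Together with monotonicity this yields $\mathrm{M}_u(r_1)=\mathrm{M}_u(r_2)$ for all $0<r_1<r_2$; thus $\mathrm{M}_u\equiv C$ for a constant $C$, which necessarily equals $\sup_M u$.

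It remains to produce an interior maximum of $u$. Picking any $0<r'<r$, from $\mathrm{M}_u(r')=C=\sup_M u$ there is $q\in\overline{B(p,r')}\subset B(p,r)$ with $u(q)=\sup_M u$, so $u$ attains its supremum at the interior point $q$; the strong maximum principle then forces $u\equiv C$. The only substantive input is Theorem 1.1; everything else is the standard convexity-and-boundedness argument, and the sole point needing a little care — getting the maximum to be attained at an interior point rather than on a boundary sphere — is handled cleanly by the inclusion $\overline{B(p,r')}\subset B(p,r)$ instead of by letting the radius tend to $0$.
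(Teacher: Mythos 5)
Your argument is correct and is essentially the paper's own proof: let $r_3\to\infty$ in the $n=2$ case of the three circles theorem, use boundedness of $u$ to get $\mathrm{M}_u(r_2)\le \mathrm{M}_u(r_1)$, and conclude via the strong maximum principle. Your extra care in identifying the constant value of $\mathrm{M}_u$ with $\sup_M u$ so that the interior maximum is genuinely a global one is a slight refinement of the paper's terser final step, but the route is the same.
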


The most essential ingredient in the proof of three circles theorems in $\mathbb{R}^n$ is the fundamental solutions for Laplace equations. Contrary to the classical case, there is a lack of such functions on general Riemannian manifolds. In order to overcome this difficulty, we combine Liu's trick in \cite{L} and the Laplacian comparison theorem.

On the other hand, three circles theorems involve the distant function $d(\cdot, p)$ with respect to some reference point $p$. But usually $d(\cdot, p)$ does not perform well enough. A suitable replacement of $d(\cdot, p)$ is the potential functions of gradient Ricci solitons. Recall that a Riemannian manifold $(M, g)$, equipped with a specific smooth function $f$, is called gradient Ricci soliton, if the equation $$\mathrm{Ric}+\mathrm{Hess}(f)=\lambda g$$ holds for some constant $\lambda$. Here $\mathrm{Ric}$ and $\mathrm{Hess}(f)$ denote the Ricci curvature of $M$ and the Hessian of $f$ respectively. A soliton is called shrinking, steady, and expanding, accordingly, if $\lambda>0$, $\lambda=0$, and $\lambda<0$. By scaling the metric $g$, one customarily assumes $\lambda\in \{-\frac{1}{2}, 0, \frac{1}{2}\}$. Gradient shrinking Ricci solitons, which can be viewed as a generalization of the Einstein matric $\mathrm{Ric}=\lambda g$,  play an important role in Ricci flow, as they often arise as Type-I singularity models to the Ricci flow (\cite{EMT}). We refer the readers to \cite{C} for a quick overview and more information. Thus one is interested in possibly classifying them or understanding their geometry. Recently, there has been a lot of research.

On a gradient shrinking Ricci soliton,  $\rho=2\sqrt{f}$ behaves like $d(\cdot, p)$. However, $\rho$ is smooth on $M$. Moreover, it satisfies certain equations related to Ricci curvature of $M$, which enable us to handle the problem more easily. We concern three circles theorems on gradient shrinking Ricci solitons. We prove the following result:
\begin{theorem}For any subharmonic function $u$ on a gradient shrinking soliton $(M,g,f)$, there exists $r_0>0$ such that for any $r_0\leq r_1<r_2<r_3$, we have $$\mathrm{M}_u(r_2)\leq\frac{\mathrm{M}_u(r_1)\big(\Phi(r_3)-\Phi(r_2)\big)+\mathrm{M}_u(r_3)\big(\Phi(r_2)-\Phi(r_1)\big)}{\Phi(r_3)-\Phi(r_1)}.$$Here $\mathrm{M}_u(r)$ is the maximal value of $u$ over the set $\{q\in M|\rho(q)\leq r\}$, and $\Phi(r)=\log r$ if $n=2$ and $=\frac{r^{2-n}}{2-n}$ if $n>2$.\end{theorem}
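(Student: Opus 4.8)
The plan is to run the classical proof of the Hadamard three circles theorem with the Euclidean radial function $|x|$ replaced by $\rho=2\sqrt{f}$, using the soliton structure of $(M,g,f)$ in place of the Laplacian comparison theorem. First I would normalize $f$ by an additive constant so that, with $\mathrm{Ric}+\mathrm{Hess}(f)=\frac{1}{2}g$, one has $\Delta f=\frac{n}{2}-R$ and $|\nabla f|^2+R=f$; since $R\ge 0$ on a (complete) gradient shrinking Ricci soliton, this forces $f\ge 0$, and $\rho$ is a smooth function wherever $f>0$. Because $f$ is proper, each sublevel set $\{\rho\le r\}$ is compact, so $\mathrm{M}_u(r)$ is finite and is non-decreasing in $r$ simply because these sublevel sets are nested; moreover $\{\rho\le r\}$ is nonempty for $r\ge r_0$ once $r_0$ is taken large, since $\Delta f\ge 0$ at a minimum of $f$ gives $\min f\le \frac{n}{2}$.

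The heart of the argument is to show that $\Phi(\rho)$ is superharmonic far out. From $\rho=2f^{1/2}$ one computes $|\nabla\rho|^2=f^{-1}|\nabla f|^2=1-\frac{R}{f}$ and $\Delta\rho=f^{-1/2}\big(\Delta f-\frac{|\nabla f|^2}{2f}\big)$, hence $\rho\,\Delta\rho=(n-1)-2R+\frac{R}{f}$. Checking the cases $n=2$ (where $\Phi=\log$) and $n>2$ (where $\Phi'(r)=r^{1-n}$, $\Phi''(r)=(1-n)r^{-n}$) separately yields the uniform identity $\Delta\Phi(\rho)=\rho^{-n}\big(\rho\,\Delta\rho-(n-1)|\nabla\rho|^2\big)$. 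Substituting the formulas above and using $f=\rho^2/4$ gives
\[
\Delta\Phi(\rho)=\rho^{-n}\,R\Big(\frac{n}{f}-2\Big)=\frac{2R\,(2n-\rho^2)}{\rho^{\,n+2}}.
\]
Since $R\ge 0$, the right-hand side is $\le 0$ whenever $\rho^2\ge 2n$. I would therefore take $r_0=\sqrt{2n}$ (or any larger value): on $\{\rho\ge r_0\}$ the function $\Phi(\rho)$ is smooth and superharmonic, and $\Phi$ is strictly increasing on $(0,\infty)$.

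It then remains to apply the maximum principle exactly as in the Euclidean case. Fix $r_0\le r_1<r_2<r_3$ and choose constants $b=\dfrac{\mathrm{M}_u(r_3)-\mathrm{M}_u(r_1)}{\Phi(r_3)-\Phi(r_1)}\ge 0$ and $a=\mathrm{M}_u(r_1)-b\,\Phi(r_1)$, so that $a+b\,\Phi(r_i)=\mathrm{M}_u(r_i)$ for $i=1,3$. On the compact region $\Omega=\{r_1\le\rho\le r_3\}$ the function $v:=u-a-b\,\Phi(\rho)$ satisfies $\Delta v\ge -b\,\Delta\Phi(\rho)\ge 0$, and on $\partial\Omega\subseteq\{\rho=r_1\}\cup\{\rho=r_3\}$ we have $v\le 0$ by the defining property of $\mathrm{M}_u$; hence $v\le 0$ on all of $\Omega$. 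Combining $u\le a+b\,\Phi(\rho)\le a+b\,\Phi(r_2)$ on $\{r_1\le\rho\le r_2\}$ with $u\le\mathrm{M}_u(r_1)\le a+b\,\Phi(r_2)$ on $\{\rho\le r_1\}$ yields $\mathrm{M}_u(r_2)\le a+b\,\Phi(r_2)$, and inserting the values of $a$ and $b$ and simplifying gives the claimed inequality. The main obstacle is the sign of $\Delta\Phi(\rho)$: the whole statement rests on the exact cancellation in the displayed identity together with the positivity of the scalar curvature, which is what makes $r_0=\sqrt{2n}$ work with no further hypothesis on $(M,g,f)$.
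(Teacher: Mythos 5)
Your proof is correct and follows essentially the same route as the paper: compute $\Delta\Phi(\rho)$ from the soliton identities $\Delta f=\tfrac{n}{2}-S$ and $|\nabla f|^2=f-S$, use $S\ge 0$ to get superharmonicity of $\Phi(\rho)$ for large $\rho$, and conclude with the maximum principle applied to $u-a-b\,\Phi(\rho)$ on the annulus $\{r_1\le\rho\le r_3\}$. (Incidentally, your value $\Delta\Phi(\rho)=\frac{2S(2n-\rho^2)}{\rho^{n+2}}$ is the correct one --- the paper's displayed $\frac{S(4n-\rho^2)}{\rho^{n+2}}$ contains a harmless factor-of-two slip --- and either formula yields a valid choice of $r_0$.)
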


Furthermore, we have

\begin{theorem}For any subharmonic function $u$ on a gradient shrinking soliton $(M,g,f)$ with scalar curvature $S\geq \frac{n-2}{2}$  and any $5n<r_1<r_2<r_3$, we have \begin{equation*}\mathrm{M}_u(r_2)\leq\frac{\mathrm{M}_u(r_1)\big(\log \log r_3- \log \log r_2 \big)+\mathrm{M}_u(r_3)\big(\log \log r_2 - \log \log r_1 \big)}{ \log \log  r_3 -\log \log  r_1}.\end{equation*} \end{theorem}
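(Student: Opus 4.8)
The plan is to reduce the theorem to a single superharmonicity statement for $\log\log\rho$ and then run a Hadamard-type maximum principle comparison. Recall that on a gradient shrinking soliton, normalized so that $\mathrm{Ric}+\mathrm{Hess}(f)=\tfrac12 g$ and $S+|\nabla f|^2=f$, one has the trace identity $\Delta f=\tfrac n2-S$; since $\rho=2\sqrt f$, that is $f=\rho^2/4$, differentiating gives the pointwise identities
\[
|\nabla\rho|^2=1-\frac{4S}{\rho^2},\qquad \rho\,\Delta\rho=n-1-2S+\frac{4S}{\rho^2},
\]
which hold wherever $f>0$; since $\{\rho>5n\}=\{f>25n^2/4\}$, both identities hold and $\rho$ is smooth there.

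The key step is: \emph{if $S\ge\frac{n-2}{2}$, then $\Delta(\log\log\rho)\le 0$ on $\{\rho>5n\}$.} Indeed, writing $\phi=\log\log\rho$ (well defined and smooth on $\{\rho>5n\}$, as $\rho>5n>e$ there and $\rho$ is smooth), one computes
\[
\Delta\phi=\frac{\Delta\rho}{\rho\log\rho}-\frac{(\log\rho+1)|\nabla\rho|^2}{(\rho\log\rho)^2}
=\frac{1}{(\rho\log\rho)^2}\Bigl[(\log\rho)\,\rho\Delta\rho-(\log\rho+1)|\nabla\rho|^2\Bigr].
\]
Substituting the two identities above, the bracket equals
\[
(n-2)\log\rho-1+S\Bigl(-2\log\rho+\frac{8\log\rho+4}{\rho^2}\Bigr).
\]
For $\rho>5n$ the coefficient of $S$ is negative, so the bracket is largest at the smallest admissible value $S=\frac{n-2}{2}$; for that value the two terms of order $\log\rho$ cancel, leaving $\frac{(n-2)(4\log\rho+2)}{\rho^2}-1$. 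As $t\mapsto(4\log t+2)/t^2$ is decreasing for $t>1$, on $\{\rho\ge 5n\}$ this is at most $\frac{(n-2)(4\log(5n)+2)}{25n^2}-1$, which is strictly negative for every $n\ge2$ (it equals $-1$ when $n=2$, and for $n\ge3$ the first term is at most $\frac{4\log(5n)+2}{25n}<1$). This proves the claim.

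With the claim established, the remainder is the standard three circles argument. By the Cao--Zhou estimates the sublevel sets $\{\rho\le r\}=\{f\le r^2/4\}$ are compact, so $\mathrm{M}_u(r)$ is finite, nondecreasing, and continuous in $r$; moreover, since $u$ is subharmonic, for a regular value $r$ of $\rho$ the maximum of $u$ over $\{\rho\le r\}$ is attained on $\{\rho=r\}$. Fix $5n<r_1<r_2<r_3$, assuming first that the $r_i$ are regular values of $\rho$ (the general case follows by continuity of $\mathrm{M}_u$). On the annular region $A=\{r_1\le\rho\le r_3\}$ put $h=A_0+B_0\log\log\rho$, where $B_0=\dfrac{\mathrm{M}_u(r_3)-\mathrm{M}_u(r_1)}{\log\log r_3-\log\log r_1}\ge0$ and $A_0=\mathrm{M}_u(r_1)-B_0\log\log r_1$, so that $h=\mathrm{M}_u(r_1)$ on $\{\rho=r_1\}$ and $h=\mathrm{M}_u(r_3)$ on $\{\rho=r_3\}$. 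Since $B_0\ge0$ and $\Delta(\log\log\rho)\le0$ on $A$, the function $u-h$ is subharmonic on $A$; applying the maximum principle componentwise, its maximum is attained on $\partial A\subseteq\{\rho=r_1\}\cup\{\rho=r_3\}$, where $u-h\le0$, hence $u\le h$ throughout $A$. Evaluating at a point of $\{\rho=r_2\}$ at which $u$ attains $\mathrm{M}_u(r_2)$ gives $\mathrm{M}_u(r_2)\le A_0+B_0\log\log r_2$, which is exactly the asserted inequality once $A_0$ and $B_0$ are written out.

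The main obstacle is the superharmonicity claim, and within it the sign bookkeeping in the bracket: it is precisely the hypothesis $S\ge\frac{n-2}{2}$ that forces the cancellation of the $\log\rho$-order terms, so that what remains is controlled by the cutoff $\rho>5n$; a weaker lower bound on $S$ would leave a positive multiple of $\log\rho$ and destroy the estimate. The remaining points — the choice of comparison function, non-regular values of $\rho$, and the possible disconnectedness of $A$ — are routine, handled respectively by the construction of $h$, continuity of $\mathrm{M}_u$, and a componentwise maximum principle.
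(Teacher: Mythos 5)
Your proof is correct and follows essentially the same route as the paper: establish $\Delta(\log\log\rho)\le 0$ on $\{\rho>5n\}$ from the soliton identities $|\nabla\rho|^2=1-4S/\rho^2$ and $\rho\Delta\rho=n-1-2S+4S/\rho^2$, then compare $u$ with an affine function of $\log\log\rho$ via the maximum principle on the annulus. The only (cosmetic) difference is in the sign analysis of the bracket: you exploit that it is linear in $S$ with negative coefficient and evaluate at $S=\frac{n-2}{2}$, whereas the paper splits into the cases $\frac{n-2}{2}\le S\le\frac n2$ and $S>\frac n2$.
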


A Liouville type theorem under the above conditions follows immediately (see Corollary \ref{Liouville 1} below).

In \cite{MW}, Munteanu and Wang establish Liouville-type results on complete K\"{a}hler manifolds admitting a real holomorphic gradient vector field. In particular, they show that a bounded holomorphic function on a shrinking gradient K\"{a}hler-Ricci soliton must be constant. We characterize holomorphic functions on shrinking gradient K\"{a}hler-Ricci solitons from the viewpoint of three circles theorem. Let $(M,\omega,f)$ be a gradient shrinking K\"{a}hler-Ricci soliton. For any holomorphic function $h$ on $M$, we denote by $\mathrm{M}_h(r)$ the maximal modulus of $h$ over the set $\{q\in M|\rho(q)\leq r\}$.

\begin{theorem}Let $(M,\omega,f)$ be a gradient shrinking K\"{a}hler-Ricci soliton. Suppose that $\nabla f\neq 0$ and $\mathrm{Ric}(e_\rho^{1,0}, \overline{e_\rho^{1,0}})\geq \frac{S}{f}$ outside a compact subset of $M$. Then there exists $r_0>0$ such that for any $r_0\leq r_1<r_2<r_3$, \begin{equation*}\log\mathrm{M}_h(r_2)\leq\frac{\log\mathrm{M}_h(r_1)(\log r_3-\log r_2)+\log\mathrm{M}_h(r_3)(\log r_2-\log r_1)}{\log r_3-\log r_1},\end{equation*}where $e_\rho^{1,0}$ is the image of $\frac{\nabla \rho}{|\nabla \rho|}$ under the natural correspondence $TM\rightarrow T^{1,0}M$. \end{theorem}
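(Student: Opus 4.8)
The plan is to derive the stated convexity of $\log\mathrm{M}_h$ in $\log r$ from a single maximum principle estimate on an annular region, which in turn is proved by restricting to the complex curves obtained by complexifying the gradient flow of $f$. Fix $r_1<r_3$ in $[r_0,\infty)$, where $r_0$ is chosen large enough that $\nabla f\neq 0$ and $\mathrm{Ric}(e_\rho^{1,0},\overline{e_\rho^{1,0}})\ge S/f$ hold on $\{\rho\ge r_0\}$; we may assume $\mathrm{M}_h(r_1)>0$, since otherwise $h$ vanishes on an open set and hence identically. Put $\alpha=\dfrac{\log\mathrm{M}_h(r_3)-\log\mathrm{M}_h(r_1)}{\log r_3-\log r_1}$ and $\beta=\log\mathrm{M}_h(r_1)-\alpha\log r_1$; since $\mathrm{M}_h$ is nondecreasing, $\alpha\ge 0$. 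As $|h|$ is subharmonic (being the modulus of a holomorphic function) and $\{\rho\le r\}$ is compact (because $f$ is proper on a shrinking soliton), the maximum modulus principle gives $\mathrm{M}_h(r)=\max_{\{\rho=r\}}|h|$ for $r\ge r_0$. Hence it suffices to prove
\[
\log|h|\ \le\ \alpha\log\rho+\beta\qquad\text{on } \Omega:=\{r_1\le\rho\le r_3\};
\]
granting this, a point $q$ with $\rho(q)=r_2$ and $|h(q)|=\mathrm{M}_h(r_2)$ gives $\log\mathrm{M}_h(r_2)\le\alpha\log r_2+\beta$, which rearranges into exactly the claimed inequality. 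On $\partial\Omega=\{\rho=r_1\}\cup\{\rho=r_3\}$ the displayed inequality holds by the choice of $\alpha$ and $\beta$.

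Three structural facts drive the argument. First, since $h$ is holomorphic, $\log|h|$ is pluriharmonic off the zero set $Z(h)$, so its restriction to any complex submanifold is harmonic there. Second, on a gradient shrinking K\"{a}hler--Ricci soliton $\nabla f$ is a real holomorphic vector field, hence $[\nabla f,J\nabla f]=0$ and the plane field spanned by $\nabla f$ and $J\nabla f$ is integrable with $J$-invariant leaves; on $\{\rho\ge r_0\}$, where $\nabla f\neq 0$, these leaves are two-real-dimensional immersed complex curves $C$ --- the \emph{complex radial curves} --- and $\nabla\rho=\nabla f/\sqrt f$ is everywhere tangent to them. Third, because $\nabla\rho$ is tangent to $C$, the Laplacian along $C$ of any function of $\rho$ is governed entirely by the complex Hessian of that function in the direction $e_\rho^{1,0}$; and combining the K\"{a}hler--Ricci soliton equation $\mathrm{Ric}+\mathrm{Hess}(f)=\tfrac12 g$ with the identities $|\nabla f|^2+S=f$ and $\rho=2\sqrt f$, a direct computation yields an identity of the schematic form
\[
f\,\big(\partial\bar\partial\log\rho\big)\big(e_\rho^{1,0},\overline{e_\rho^{1,0}}\big)\ =\ a\,\frac{S}{f}-b\,\mathrm{Ric}\big(e_\rho^{1,0},\overline{e_\rho^{1,0}}\big),\qquad 0<a\le b .
\]
Using $S\ge 0$, the hypothesis $\mathrm{Ric}(e_\rho^{1,0},\overline{e_\rho^{1,0}})\ge S/f$ then forces $\big(\partial\bar\partial\log\rho\big)(e_\rho^{1,0},\overline{e_\rho^{1,0}})\le 0$ on $\{\rho\ge r_0\}$, so $\log\rho$ is superharmonic on each $C\cap\{\rho\ge r_0\}$. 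Since $\alpha\ge 0$, the function $v:=\log|h|-\alpha\log\rho-\beta$ is therefore subharmonic on $C\cap\{\rho\ge r_0\}$ (away from $Z(h)$, where it equals $-\infty$), for every complex radial curve $C$.

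Finally, suppose the displayed inequality fails; then $v$, being upper semicontinuous and bounded above on the compact set $\Omega$, attains a maximum $v(q)>0$. Since $v\le 0$ on $\partial\Omega$, the point $q$ lies in the interior of $\Omega$ and satisfies $h(q)\neq 0$, hence lies on a complex radial curve $C$, and $v|_C$ is subharmonic near $q$ with an interior local maximum at $q$. By the strong maximum principle on the Riemann surface $C$, $v|_C$ is constant, equal to $v(q)>0$, on the connected component $C_0$ of $q$ in $C\cap\{\rho\ge r_0\}$ (so $h$ has no zero on $C_0$). Now flow $q$ backward along $\nabla f$: as $\rho$ strictly decreases along this flow while $\nabla f\neq 0$ and $\rho$ has no critical points in $\{\rho\ge r_0\}$, the flow line stays in $C\cap\{r_1\le\rho\le r_2\}\subseteq C_0$ and reaches a point $q'$ with $\rho(q')=r_1$, where $v(q')\le\log\mathrm{M}_h(r_1)-\alpha\log r_1-\beta=0$, contradicting $v(q')=v(q)>0$. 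The step needing the most care is the curvature computation --- confirming, with the correct normalization constants, that $\mathrm{Ric}(e_\rho^{1,0},\overline{e_\rho^{1,0}})\ge S/f$ is exactly what yields the sign $\big(\partial\bar\partial\log\rho\big)(e_\rho^{1,0},\overline{e_\rho^{1,0}})\le 0$ --- together with verifying that the complex radial curves are genuinely defined, and $\nabla f$ complete, throughout the region where they are invoked, which is where the lower bound $\rho\ge r_0$ and standard structure results for shrinking solitons enter.
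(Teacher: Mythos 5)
Your proposal is correct, and it reaches the same comparison inequality $\log|h|\le\alpha\log\rho+\beta$ on the annulus via the same key curvature input, but it executes the maximum principle by a genuinely different mechanism. The paper works directly on the $n$-dimensional annulus: it perturbs the comparison function to $a_\varepsilon\log(\rho-\varepsilon)+b_\varepsilon$, takes a global interior maximum point $\tilde q_\varepsilon$ of the difference, and applies the second-derivative test only in the complex direction $(\nabla\rho^{1,0},\overline{\nabla\rho^{1,0}})$, using Lemma \ref{lemma 5} (pluriharmonicity of $\log|h|$ off its zero set) and a computation as in Lemma \ref{lemma 6}; the $\varepsilon$-shift is what produces the strict inequality $\mathrm{Hess}(F_\varepsilon)(\nabla\rho^{1,0},\overline{\nabla\rho^{1,0}})<0$ needed for the contradiction, and the theorem is recovered by letting $\varepsilon\to 0^+$. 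You instead foliate $\{\rho\ge r_0\}$ by the integral leaves of $\mathrm{span}\{\nabla f,J\nabla f\}$ (integrable because $f_{ij}=0$ makes $\nabla f$ real holomorphic, so $[\nabla f,J\nabla f]=0$), observe that on each such complex curve $\log|h|$ restricts to a harmonic function and $\log\rho$ to a superharmonic one, and then run the one-dimensional strong maximum principle plus a backward gradient-flow argument to propagate a putative interior maximum to the inner boundary. This buys you freedom from the $\varepsilon$-regularization (you need only the non-strict sign $\mathrm{Hess}(\log\rho)(e_\rho^{1,0},\overline{e_\rho^{1,0}})\le 0$), at the cost of invoking the leaf structure, the minimality of complex submanifolds, and the finite-time exit of the flow, all of which you correctly flag and all of which are standard. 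Two small accounting remarks: the "schematic identity" you defer is exactly the paper's Lemma \ref{lemma 6}, which gives $\mathrm{Hess}(\log\rho)(\nabla\rho^{1,0},\overline{\nabla\rho^{1,0}})=\frac{|\nabla f|^2}{2f^3}\bigl(S-f\,\mathrm{Ric}(e_\rho^{1,0},\overline{e_\rho^{1,0}})\bigr)$, so in your notation $a=b$ and the hypothesis $\mathrm{Ric}(e_\rho^{1,0},\overline{e_\rho^{1,0}})\ge S/f$ yields the sign directly, without even needing $S\ge0$; and note that the paper's stated hypothesis $\nabla f\neq 0$ is also used there to guarantee strict negativity after the $\varepsilon$-shift, a role it does not need to play in your version.
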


We establish a Liouville type theorem for holomorphic functions on such manifolds (see Corollary \ref{Liouville 2} below).

This paper is organized as follows. In Section 3, we establish three circles theorems for subharmonic functions on Riemannian manifolds with nonnegative Ricci curvature, as well as a Liouville type theorem for surfaces with nonnegative Gauss curvature. In Section 4, three circles theorems and Liouville type theorems are proved for subharmonic functions on gradient shrinking Ricci solitons with some suitable conditions on scalar curvature. In the final section, we consider holomorphic functions on gradient shrinking K\"{a}hler-Ricci solitons. Under some conditions of curvature, three circles theorems and Liouville type theorems are established.

\section{Nonnagtive Ricci condition}
Let $(M^n,g)$ be a connected complete $n$-dimensional Riemannian manifold with $n\geq 2$. We denote by $\Delta$ the Laplace-Beltrami operator on $M$, i.e., $\Delta=\mathrm{div}\circ \nabla$. A $C^2$ function $u$ on $M$ is called subharmonic if $\Delta u\geq 0$. For subharmonic functions, we have the following strong maximum principle (see, e.g., Theorem 7.1.7 in \cite{P}).
\begin{proposition}\label{strong maximum principle}If $u$ is a subharmonic function on $M$, then $u$ is constant in a neighborhood of every local maximum. In particular, if $u$ has a global maximum, then $u$ is constant. \end{proposition}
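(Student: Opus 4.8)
The plan is to prove the local statement by the classical Hopf strong maximum principle adapted to the Riemannian setting, and then deduce the global statement by a connectedness argument. Suppose $u$ attains a local maximum value $m=u(p)$ at $p$, and let $\Omega$ be a connected open neighborhood of $p$ on which $u\le m$. Set
\[
A=\{x\in\Omega: u(x)=m\},\qquad B=\{x\in\Omega: u(x)<m\}.
\]
Then $A$ is relatively closed in $\Omega$ and contains $p$, while $B$ is open; it suffices to show $A=\Omega$, which gives $u\equiv m$ near $p$. Assume for contradiction that $B\neq\emptyset$. Since $\Omega$ is connected, $A$ and $B$ cannot both be open, so there is a point $y\in A$ in the closure of $B$. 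Choosing $x_0\in B$ close enough to $y$, I can arrange $\rho:=d(x_0,A)<d(x_0,\partial\Omega)$ and, by taking $x_0$ near $y$, also $\rho$ smaller than the injectivity radius at $x_0$. Then the open geodesic ball $B(x_0,\rho)$ lies entirely in $B$, its closure lies in $\Omega$, and there is a point $y_0\in\partial B(x_0,\rho)\cap A$, so that $u(y_0)=m$ while $u<m$ on $B(x_0,\rho)$.

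Next I would run the boundary point lemma on the annulus $\{\rho/2\le r\le\rho\}$, where $r(x)=d(x,x_0)$ is smooth (as $\rho<\mathrm{inj}(x_0)$). Introduce the barrier
\[
w=e^{-\alpha r^2}-e^{-\alpha\rho^2},
\]
which is positive for $r<\rho$ and vanishes on the outer sphere $r=\rho$. Using $|\nabla r|=1$ one computes
\[
\Delta w=e^{-\alpha r^2}\bigl(4\alpha^2 r^2-2\alpha-2\alpha r\,\Delta r\bigr),
\]
and since $\Delta r$ is bounded on the compact annulus and $r\ge\rho/2>0$ there, choosing $\alpha$ large makes $\Delta w>0$ on $\{\rho/2\le r\le\rho\}$. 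Now consider $v=u+\varepsilon w$. On $\{r=\rho/2\}$ we have $u\le m-\delta$ for some $\delta>0$, so taking $\varepsilon$ small with $\varepsilon\max w<\delta$ gives $v<m$ there; on $\{r=\rho\}$ we have $w=0$, so $v=u\le m$ with $v(y_0)=m$. Because $\Delta v=\Delta u+\varepsilon\Delta w>0$, the function $v$ has no interior maximum in the open annulus, hence attains its maximum $m$ on the boundary, and this maximum occurs at $y_0$ on the outer sphere. Consequently the outward radial derivative satisfies $\partial_r v(y_0)\ge 0$. On the other hand $y_0$ is an interior maximum of $u$ in $\Omega$, so $\nabla u(y_0)=0$ and $\partial_r u(y_0)=0$, while $\partial_r w(y_0)=-2\alpha\rho\,e^{-\alpha\rho^2}<0$; thus $\partial_r v(y_0)=\varepsilon\,\partial_r w(y_0)<0$, a contradiction. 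Therefore $B=\emptyset$ and $u\equiv m$ on $\Omega$.

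The main obstacle is the technical point that on a general Riemannian manifold the distance function $r(\cdot)=d(\cdot,x_0)$ is only smooth inside the cut locus and $\Delta r$ must be controlled to make the barrier strictly subharmonic; this is handled precisely by choosing $x_0$ near $y$ so that $\rho$ is below the injectivity radius, after which $r$ is smooth on the punctured ball and $\Delta r$ is bounded on the compact annulus, with no curvature hypothesis needed. Finally, for the global statement, if $u$ attains a global maximum $m$ at some point, then the set $\{u=m\}$ is closed and, by the local conclusion just proved, also open; since $M$ is connected this set is all of $M$, so $u$ is constant.
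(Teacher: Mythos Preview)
Your argument is correct: the choice of $x_0$ near a boundary point $y$ of $A$ ensures the comparison ball has radius below the injectivity radius, so $r$ is smooth on the closed annulus and the barrier computation goes through; the contradiction between $\partial_r v(y_0)\ge 0$ (from the maximum of $v$ on the annulus being attained at $y_0$) and $\partial_r v(y_0)=\varepsilon\,\partial_r w(y_0)<0$ (from $\nabla u(y_0)=0$) is valid. The connectedness argument for the global statement is standard.

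As for comparison with the paper: the paper does not supply its own proof of this proposition. It merely quotes the statement and cites Theorem~7.1.7 of Petersen's \emph{Riemannian Geometry}. Your write-up is therefore a self-contained substitute for that reference rather than an alternative to anything in the paper. For what it is worth, the textbook proof proceeds along the same lines---reduction to a Hopf boundary-point lemma via an exponential barrier on a small geodesic ball---so your approach is the expected one.
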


Let $d$ be the distance function induced from $g$. Fix a point $p\in M$. In this section, we denote by $\rho(\cdot)=d(p,\cdot)$ the distance function with respect to $p$. For any $r>0$, we denote $$B_p(r)=\{q\in M|\rho(q)<r\}.$$ For a given subharmonic function $u$ on $M$, we denote $$\mathrm{M}_u(r)=\max_{\overline{B}_p(r)}u,$$where $\overline{B}_p(r)=\{q\in M|\rho(q)\leq r\}$.

\begin{lemma}\label{lemma 1}Suppose that $u$ is a nonconstant subharmonic function on $M$. Then, as a function of $r$, $\mathrm{M}_u(r)$ is strictly increasing.\end{lemma}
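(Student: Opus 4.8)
The plan is to derive the strict monotonicity of $\mathrm{M}_u(r)$ from the strong maximum principle (Proposition \ref{strong maximum principle}) together with the compactness and connectedness of closed geodesic balls. First I would take $0<r_1<r_2$ and set $a=\mathrm{M}_u(r_1)$, $b=\mathrm{M}_u(r_2)$. Since $\overline{B}_p(r_1)\subseteq\overline{B}_p(r_2)$ we automatically have $a\leq b$, so the only thing to rule out is $a=b$. Suppose for contradiction that $a=b$. Then the maximum of $u$ over $\overline{B}_p(r_2)$ is attained at some point $q\in\overline{B}_p(r_1)$; in particular $u$ attains at $q$ a value which is $\geq u(x)$ for all $x$ in the open ball $B_p(r_2)$, which is a neighborhood of $q$ (here one uses that $M$ is complete and connected, so $B_p(r_2)$ is open and nonempty and contains $\overline{B}_p(r_1)$, at least once we note $r_1<r_2$). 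Hence $q$ is a local maximum of $u$.

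Next I would invoke Proposition \ref{strong maximum principle}: since $u$ is subharmonic and has a local maximum at $q$, it is constant in a neighborhood of $q$. The standard propagation argument then upgrades this to: $u$ is constant on all of the connected open set $B_p(r_2)$. Concretely, let $A=\{x\in B_p(r_2): u(x)=u(q)\}$. This set is nonempty (it contains $q$), closed in $B_p(r_2)$ by continuity of $u$, and open in $B_p(r_2)$ because at each of its points $u$ attains a local maximum (equal to the global max over $B_p(r_2)$) and hence, by Proposition \ref{strong maximum principle} again, is locally constant there. Since $B_p(r_2)$ is connected, $A=B_p(r_2)$, so $u\equiv u(q)$ on $B_p(r_2)$, and then by continuity $u\equiv u(q)$ on $\overline{B}_p(r_2)$ as well. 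Finally, because $M$ is connected and $u$ is real-analytic-like only in the weak sense of the maximum principle, I would extend constancy from $\overline{B}_p(r_2)$ to all of $M$: any point $y\in M$ lies in $\overline{B}_p(R)$ for $R$ large, and running the same open-closed argument on the connected set $B_p(R)$ (whose interior contains $\overline{B}_p(r_2)$, on which $u$ already equals its max) forces $u\equiv u(q)$ on $B_p(R)$; letting $R\to\infty$ gives $u$ constant on $M$, contradicting the hypothesis that $u$ is nonconstant.

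The main obstacle, such as it is, is purely a bookkeeping matter: making sure the propagation-of-constancy argument is stated cleanly, i.e. that the set where $u$ equals its local-max value is simultaneously open (via Proposition \ref{strong maximum principle}) and closed (via continuity) in the relevant connected open ball, and that one genuinely has an \emph{open} neighborhood of the maximizing point inside the larger ball — this is where $r_1<r_2$ strictly is used. No curvature hypothesis enters this lemma; it is a soft consequence of the strong maximum principle, so there is no analytic difficulty, only the need to phrase the connectedness argument carefully so that constancy on $\overline{B}_p(r_1)$ forces constancy on $M$. One small point worth spelling out is that $\overline{B}_p(r_1)$, though possibly not equal to the closure used elsewhere, is contained in the open ball $B_p(r_2)$, which is exactly what lets us treat the maximizer $q$ as an interior local maximum rather than a boundary point.
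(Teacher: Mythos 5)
Your argument is careful and correct up to and including the conclusion that $u\equiv u(q)$ on $B_p(r_2)$: the identification of $q\in\overline{B}_p(r_1)\subset B_p(r_2)$ as an interior local maximum and the open--closed propagation inside the connected open ball $B_p(r_2)$ both work exactly as you describe. The genuine gap is the final step, where you try to push constancy from $B_p(r_2)$ out to all of $M$ by running the same argument on $B_p(R)$. That argument needs the value $u(q)$ to be the maximum of $u$ over $B_p(R)$, which you do not know; without that, points of the set $\{x\in B_p(R):u(x)=u(q)\}$ need not be local maxima of $u$, so openness fails. There is no unique continuation for subharmonic functions, and in fact no repair is possible because the lemma as stated is false: on $M=\mathbb{R}^2$ take $u(x,y)=x^3$ for $x>0$ and $u=0$ for $x\le 0$ (a $C^2$, nonconstant function with $\Delta u=6\max(x,0)\ge 0$) and $p=(-10,0)$; then $\mathrm{M}_u(r)=0$ for all $r\le 10$, so $\mathrm{M}_u$ is not strictly increasing.

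For what it is worth, the paper's own proof makes the same leap in one line: from $\mathrm{M}_u(r_1)\ge\mathrm{M}_u(r_2)$ it asserts that $u$ attains a \emph{global} maximum inside $B_p(r_2)$ and invokes the second clause of Proposition \ref{strong maximum principle}, but a maximum over $\overline{B}_p(r_2)$ attained at an interior point is only a local maximum. What your argument actually establishes is the correct statement: $\mathrm{M}_u$ is nondecreasing, and if $\mathrm{M}_u(r_1)=\mathrm{M}_u(r_2)$ for some $r_1<r_2$ then $u$ is constant on $B_p(r_2)$. This weaker dichotomy is all that is needed downstream: in Theorem \ref{three-circles1} one only needs $a_\varepsilon>0$, i.e.\ $\mathrm{M}_u(r_1)<\mathrm{M}_u(r_3)$, and in the degenerate case $\mathrm{M}_u(r_1)=\mathrm{M}_u(r_3)$ the three-circles inequality is trivial because then $\mathrm{M}_u(r_2)\le\mathrm{M}_u(r_3)=\mathrm{M}_u(r_1)$. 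I would state and prove the lemma in that dichotomy form rather than as strict monotonicity at every scale.
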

\begin{proof}If not, then there exist $0<r_1<r_2$ such that $\mathrm{M}_u(r_1)\geq\mathrm{M}_u(r_2)$. In other words, $u$ attains a global maximum inside the ball $B_p(r_2)$, which contradicts to the above strong maximum principle.\end{proof}

For $n\geq 2$, we denote  \[\Phi(r)=\Phi_n(r)=\begin{cases}\log r,&\text{if }n=2;\\[3pt]\frac{r^{2-n}}{2-n},&\text{if }n>2.\end{cases}\]Then $$\Phi'(r)=\frac{1}{r^{n-1}}$$and $$\Phi''(r)=\frac{1-n}{r^{n}}.$$Obviously, $\Phi$ is an increasing function on $(0,\infty)$. Given $r>0$ and $0<\varepsilon<r$, we consider the function $\Phi(\rho-\varepsilon)$ on $\{q\in M|\rho(q)\geq r\}$. We have

\begin{lemma}\label{lemma 2}If the Ricci curvature of $M$ is nonnegative, then $$\Delta \Phi(\rho-\varepsilon)\leq \frac{(1-n)\varepsilon}{\rho(\rho-\varepsilon)^n}$$ on $M\setminus (\mathrm{Cut}(p)\cup\{p\})$. Here $\mathrm{Cut}(p)$ is the cut locus of $p$ on $M$.\end{lemma}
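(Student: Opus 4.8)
The plan is to reduce the statement to the Laplacian comparison theorem by the chain rule. On $M\setminus(\mathrm{Cut}(p)\cup\{p\})$ the distance function $\rho$ is smooth with $|\nabla\rho|\equiv 1$, and on the region under consideration (where $\rho\geq r>\varepsilon$) the quantity $\rho-\varepsilon$ is positive, so $\Phi(\rho-\varepsilon)$ is a smooth composition. The standard formula for the Laplacian of a composition then gives
$$\Delta\Phi(\rho-\varepsilon)=\Phi''(\rho-\varepsilon)|\nabla\rho|^2+\Phi'(\rho-\varepsilon)\,\Delta\rho=\Phi''(\rho-\varepsilon)+\Phi'(\rho-\varepsilon)\,\Delta\rho.$$

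Next I would invoke the Laplacian comparison theorem: since $\mathrm{Ric}\geq 0$, one has $\Delta\rho\leq\frac{n-1}{\rho}$ pointwise on $M\setminus(\mathrm{Cut}(p)\cup\{p\})$. The one point deserving attention is the sign of the coefficient in front of $\Delta\rho$: because $\Phi'(\rho-\varepsilon)=(\rho-\varepsilon)^{1-n}>0$ on this region, multiplying the comparison inequality by $\Phi'(\rho-\varepsilon)$ preserves its direction, yielding
$$\Delta\Phi(\rho-\varepsilon)\leq\Phi''(\rho-\varepsilon)+\frac{n-1}{\rho}\,\Phi'(\rho-\varepsilon).$$

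Finally I would simply substitute $\Phi'(r)=r^{1-n}$ and $\Phi''(r)=(1-n)r^{-n}$ and combine the two terms over a common denominator:
$$\frac{1-n}{(\rho-\varepsilon)^n}+\frac{n-1}{\rho(\rho-\varepsilon)^{n-1}}=\frac{(n-1)\big((\rho-\varepsilon)-\rho\big)}{\rho(\rho-\varepsilon)^n}=\frac{(1-n)\varepsilon}{\rho(\rho-\varepsilon)^n},$$
which is exactly the asserted bound. Since the formulas $\Phi'(r)=r^{1-n}$, $\Phi''(r)=(1-n)r^{-n}$ are valid uniformly for $n=2$ and $n>2$, no case distinction is needed. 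There is no real obstacle in this lemma; the only things to verify carefully are that $\rho-\varepsilon$ stays positive on the domain in question (so that $\Phi(\rho-\varepsilon)$, $\Phi'(\rho-\varepsilon)$, $\Phi''(\rho-\varepsilon)$ are defined and $\Phi'(\rho-\varepsilon)>0$) and that the computation takes place on $M\setminus(\mathrm{Cut}(p)\cup\{p\})$, where $\rho$ is smooth and the Laplacian comparison inequality holds in the classical pointwise sense.
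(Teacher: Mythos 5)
Your proof is correct and follows essentially the same route as the paper: expand $\Delta\Phi(\rho-\varepsilon)$ by the chain rule using $|\nabla\rho|=1$ off the cut locus, apply the Laplacian comparison $\Delta\rho\leq\frac{n-1}{\rho}$ (noting $\Phi'>0$ so the inequality direction is preserved), and combine the terms to get the stated bound. Your added care about the positivity of $\rho-\varepsilon$ and the uniformity of the formulas for $\Phi'$, $\Phi''$ across $n=2$ and $n>2$ is consistent with, and slightly more explicit than, the paper's argument.
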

\begin{proof}On $M\setminus (\mathrm{Cut}(p)\cup\{p\})$, we have \begin{align*}
\Delta \Phi(\rho-\varepsilon)&=\nabla^i\nabla_i \Phi(\rho-\varepsilon)\\[3pt]
&=\nabla^i\big( \Phi'(\rho-\varepsilon)\nabla_i\rho\big)\\[3pt]
&=\Phi''(\rho-\varepsilon)\nabla^i\rho\nabla_i\rho+ \Phi'(\rho-\varepsilon)\nabla^i\nabla_i\rho\\[3pt]
&=\Phi''(\rho-\varepsilon)|\nabla \rho|^2+\Phi'(\rho-\varepsilon)\Delta \rho\\[3pt]
&=\frac{1-n}{(\rho-\varepsilon)^n}+\frac{1}{(\rho-\varepsilon)^{n-1}}\Delta \rho.
\end{align*}where the last equality follows from the well-known fact that $\|\nabla \rho\|=1$ on $M\setminus (\mathrm{Cut}(p)\cup\{p\})$.

Since $\mathrm{Ric}(M)\geq 0$, by the Laplace comparison theorem, we have $$\Delta \rho\leq \frac{n-1}{\rho}$$ on $M\setminus (\mathrm{Cut}(p)\cup\{p\})$. Therefore$$
\Delta h_\varepsilon\leq \frac{1-n}{(\rho-\varepsilon)^n}+\frac{1}{(\rho-\varepsilon)^{n-1}}\frac{(n-1)}{\rho}=\frac{(1-n)\varepsilon}{\rho(\rho-\varepsilon)^n}$$
on $M\setminus (\mathrm{Cut}(p)\cup\{p\})$.\end{proof}

\begin{theorem}\label{three-circles1}Suppose that the Ricci curvature of $M$ is nonnegative and $u$ is a subharmonic function on $M$. Then for any $0<r_1<r_2<r_3$, we have $$\mathrm{M}_u(r_2)\leq\frac{\mathrm{M}_u(r_1)\big(\Phi(r_3)-\Phi(r_2)\big)+\mathrm{M}_u(r_3)\big(\Phi(r_2)-\Phi(r_1)\big)}{\Phi(r_3)-\Phi(r_1)}.$$\end{theorem}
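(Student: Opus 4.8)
The plan is to adapt the classical comparison-function argument, using $\Phi(\rho-\varepsilon)$ in place of the Euclidean fundamental solution and invoking Lemma \ref{lemma 2} to control its Laplacian. The statement is trivial if $u$ is constant (Proposition \ref{strong maximum principle}), so assume $u$ is nonconstant; by Lemma \ref{lemma 1}, $\mathrm{M}_u$ is then strictly increasing, which will be needed to make the comparison strict enough. Fix $0<r_1<r_2<r_3$ and a small $\varepsilon>0$ with $\varepsilon<r_1$. On the closed annulus $A=\{r_1\leq\rho\leq r_3\}$ consider the competitor
\[
w=a+b\,\Phi(\rho-\varepsilon),
\]
where the constants $a,b$ are chosen so that $w$ equals $\mathrm{M}_u(r_1)$ when $\rho=r_1$ and equals $\mathrm{M}_u(r_3)$ when $\rho=r_3$; since $\Phi$ is strictly increasing and $\mathrm{M}_u(r_3)>\mathrm{M}_u(r_1)$, this forces $b>0$. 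The key inequality is that $v:=u-w$ is subharmonic on $A$ away from the bad set: indeed $\Delta u\geq 0$ while $\Delta w=b\,\Delta\Phi(\rho-\varepsilon)\leq \frac{b(1-n)\varepsilon}{\rho(\rho-\varepsilon)^n}\leq 0$ for $n\geq 2$ by Lemma \ref{lemma 2}, so $\Delta v\geq 0$ on $A\setminus(\mathrm{Cut}(p)\cup\{p\})$.

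Next I would run the maximum principle for $v$ on the annulus. The subtlety is the cut locus, where $\rho$ (hence $w$) is only Lipschitz and $\Delta\rho$ must be interpreted in the barrier/distributional sense. The clean way is Calabi's trick: at a point $q_0\in\mathrm{Cut}(p)$ one slides the base point slightly along a minimizing geodesic to get a smooth upper barrier $\tilde\rho\geq\rho$ with $\tilde\rho(q_0)=\rho(q_0)$ and $\Delta\tilde\rho\leq\frac{n-1}{\tilde\rho}$ near $q_0$; since $\Phi$ and $b>0$ are increasing, $\tilde w:=a+b\Phi(\tilde\rho-\varepsilon)\geq w$ with equality at $q_0$ and $\Delta\tilde w\leq 0$ there, so $u-\tilde w$ is a smooth subharmonic upper barrier for $v$ at $q_0$. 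Thus $v$ satisfies the maximum principle on all of $A$: its maximum is attained on $\partial A=\{\rho=r_1\}\cup\{\rho=r_3\}$. On $\{\rho=r_1\}$ we have $u\leq \mathrm{M}_u(r_1)=w$, and on $\{\rho=r_3\}$ we have $u\leq\mathrm{M}_u(r_3)=w$, so $v\leq 0$ throughout $A$. (Here I use that $B_p(\cdot)$ is defined via $\rho$, so $\{\rho\leq r_1\}$ meets the annulus exactly on its inner sphere; one should also note that $u$ on $\{\rho\le r_1\}$ is already dominated by $\mathrm M_u(r_1)$, handling the interior.)

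In particular, evaluating at any point with $\rho=r_2$ and taking the maximum over such points gives
\[
\mathrm{M}_u(r_2)\leq a+b\,\Phi(r_2-\varepsilon)
= \mathrm{M}_u(r_1)+\bigl(\mathrm{M}_u(r_3)-\mathrm{M}_u(r_1)\bigr)\frac{\Phi(r_2-\varepsilon)-\Phi(r_1-\varepsilon)}{\Phi(r_3-\varepsilon)-\Phi(r_1-\varepsilon)},
\]
using the explicit solution $b=\bigl(\mathrm{M}_u(r_3)-\mathrm{M}_u(r_1)\bigr)/\bigl(\Phi(r_3-\varepsilon)-\Phi(r_1-\varepsilon)\bigr)$ and $a=\mathrm{M}_u(r_1)-b\,\Phi(r_1-\varepsilon)$. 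Now let $\varepsilon\to 0^+$: by continuity of $\Phi$ on $(0,\infty)$ the right-hand side converges to
\[
\mathrm{M}_u(r_1)+\bigl(\mathrm{M}_u(r_3)-\mathrm{M}_u(r_1)\bigr)\frac{\Phi(r_2)-\Phi(r_1)}{\Phi(r_3)-\Phi(r_1)},
\]
which rearranges to the claimed inequality. I expect the main obstacle to be the careful treatment of the cut locus in the maximum principle step — ensuring Lemma \ref{lemma 2}'s pointwise bound upgrades to a genuine barrier inequality everywhere on the annulus — together with checking that no issue arises near $p$ itself (which is harmless since $p\notin A$ as soon as $\varepsilon<r_1$, and $\{\rho\le r_1\}$ contributes only through its boundary sphere where $u\le\mathrm M_u(r_1)$).
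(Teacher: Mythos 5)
Your proposal is correct and follows essentially the same route as the paper's proof: the comparison function $a+b\,\Phi(\rho-\varepsilon)$ matched to the boundary values, the strict negativity of $\Delta\Phi(\rho-\varepsilon)$ from Lemma \ref{lemma 2} to rule out an interior maximum, Calabi's base-point-sliding barrier at the cut locus, and the limit $\varepsilon\to 0^+$. The only differences are cosmetic (the paper scales the competitor by $\Phi(r_3)-\Phi(r_1)$ and phrases the maximum principle as a contradiction at an interior maximum), plus a harmless imprecision in your barrier estimate, where $\Delta\tilde\rho\leq (n-1)/(\tilde\rho-\varepsilon_1)$ rather than $(n-1)/\tilde\rho$.
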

\begin{proof}For any $0<r_1<r_3$, we denote $$A(r_1,r_3)=\{q\in M|r_1\leq\rho (q)\leq r_3\}.$$We define two functions $G$ and $F$ on $A(r_1,r_3)$ respectively by $$G=\big(\Phi(r_3)-\Phi(r_1)\big)u,$$and $$F=\big(\mathrm{M}_u(r_3)-\mathrm{M}_u(r_1)\big)\Phi(\rho)+\mathrm{M}_u(r_1)\Phi(r_3)-\mathrm{M}_u(r_3)\Phi(r_1).$$We are going to show that $G\leq F$ on $A(r_1,r_3)$, from which our conclusions follow.

For any $0<\varepsilon<r_1$, we define $$F_\varepsilon=a_\varepsilon \Phi(\rho-\varepsilon)+b_\varepsilon=a_\varepsilon h_\varepsilon+b_\varepsilon,$$where $a_\varepsilon$ and $b_\varepsilon$ are real numbers such that $F_\varepsilon(q)=\big(\Phi(r_2)-\Phi(r_1)\big)\mathrm{M}_u(r_1)$ for $q\in \partial B_p(r_1)$ and $F_\varepsilon(q)=\big(\Phi(r_3)-\Phi(r_1)\big)\mathrm{M}_u(r_3)$ for $q\in \partial B_p(r_3)$. It is easy to see that $$a_\varepsilon=\frac{\big(\Phi(r_3)-\Phi(r_1))(\mathrm{M}_u(r_1)-\mathrm{M}_u(r_3))}{\Phi(r_1-\varepsilon)-\Phi(r_3-\varepsilon)},$$ and $$b_\varepsilon=\big(\Phi(r_3)-\Phi(r_1)\big)\frac{\mathrm{M}_u(r_1)\Phi(r_3-\varepsilon)-\mathrm{M}_u(r_3)\Phi(r_1-\varepsilon)}{\Phi(r_3-\varepsilon)-\Phi(r_1-\varepsilon)}.$$Since $\Phi(r)$ and $\mathrm{M}_u(r)$ are increasing in $r$, $a_\varepsilon>0$. Obviously, $F_\varepsilon\rightarrow F$ as $\varepsilon\rightarrow 0^+$.

Assume that $(G-F_\varepsilon)(q)>0$ for some $q\in A(r_1,r_3)$. Since $G-F_\epsilon$ is continuous on $A(r_1,r_3)$, it attains its maximum at some $\tilde{q}_\varepsilon\in A(r_1,r_3)$. Moreover $(G-F_\varepsilon)(\tilde{q}_\varepsilon)>0$. Note that for $q\in \partial B_p(r_1)$, $$
(G-F_\varepsilon)(q)=\big(\Phi(r_3)-\Phi(r_1)\big)\big(u(q)-\mathrm{M}_u(r_1)\big)\leq 0.$$Similarly, for $q\in \partial B_p(r_3)$, $$
(G-F_\varepsilon)(q)=\big(\Phi(r_3)-\Phi(r_1)\big)\big(u(q)-\mathrm{M}_u(r_3)\big)\leq 0.$$So $G-F_\varepsilon$ is always non-positive on the boundary of $A(r_1,r_3)$, and hence $\tilde{q}_\varepsilon$ is an interior point of $A(r_1,r_3)$.

We first assume that $\tilde{q}_\epsilon\notin \mathrm{Cut}(p)$. It follows that $$\Delta(G-F_\varepsilon)(\tilde{q}_\varepsilon)\leq 0.$$On the other hand, Since $u$ is subharmonic, $\Delta G\geq 0$. So, by Lemma \ref{lemma 1} and \ref{lemma 2}, we have \begin{align*}
\Delta(G-F_\epsilon)(\tilde{q}_\varepsilon)&=\Delta G(\tilde{q}_\varepsilon)-a_\varepsilon\Delta h_\varepsilon(\tilde{q}_\varepsilon)\\[3pt]
&\geq a_\epsilon\frac{(1-n)\varepsilon}{\rho(\tilde{q}_\varepsilon)\big(\rho(\tilde{q}_\varepsilon)-\varepsilon\big)^n}\\[3pt]
&>0,
\end{align*}a contradiction. Hence $G-F_\varepsilon\leq 0$ on $A(r_1,r_3)$. By letting $\varepsilon\rightarrow 0^+$, we have $G-F\leq 0$ on $A(r_1,r_3)$, as desired.

If $\tilde{q}_\epsilon\in \mathrm{Cut}(p)$, we choose a number $0<\varepsilon_1<\varepsilon$ and a point $p_1$ on a fixed minimal geodesic connecting $p$ and $\tilde{q}_\varepsilon$ such that $d(p,p_1)=\varepsilon_1$. Define $$F_{\varepsilon,\varepsilon_1}=a_\varepsilon \Phi(\hat{\rho}+\varepsilon_1-\varepsilon)+b_\varepsilon,$$where $\hat{\rho}(\cdot)=d(p_1,\cdot)$. Then for any $q\in M\setminus \{p\}$, \begin{align*}
F_\varepsilon(q)&=a_\varepsilon \Phi(d(p,q)-\epsilon)+b_\varepsilon\\[3pt]
&\leq a_\varepsilon \Phi(d(p,p_1)+d(p_1,q)-\epsilon)+b_\varepsilon\\[3pt]
&=F_{\varepsilon,\varepsilon_1}(q).
\end{align*}Furthermore, since $F_{\varepsilon,\varepsilon_1}(\tilde{q}_\epsilon)=F_\varepsilon(\tilde{q}_\varepsilon)$, $\tilde{q}_\varepsilon$ is the maximum point of $G-F_{\varepsilon,\varepsilon_1}$ on $A(r_1,r_2)$ as well. Thus $$\Delta(G-F_{\varepsilon,\varepsilon_1})(\tilde{q}_\varepsilon)\leq 0.$$On the other hand, \begin{align*}
\Delta(G-F_{\varepsilon,\varepsilon_1})(\tilde{q}_\varepsilon)&=\Delta G(\tilde{q}_\varepsilon)-a_\epsilon\Delta h_{\varepsilon-\varepsilon_1}(\tilde{q}_\varepsilon)\\[3pt]
&\geq a_\varepsilon\frac{(1-n)(\varepsilon-\varepsilon_1)}{\rho(\tilde{q}_\varepsilon)\big(\rho(\tilde{q}_\varepsilon)-(\varepsilon-\varepsilon_1)\big)^n}\\[3pt]
&>0,
\end{align*}a contradiction. Again, we have $G-F_\varepsilon\leq 0$ on $A(r_1,r_2)$, and hence $G-F\leq 0$ on $A(r_1,r_2)$.\end{proof}

\begin{remark}If $M=\mathbb{R}^n$ is the usual Euclidean space, then the above theorem coincides with the classical one (see, e.g., Theorem 30 in Chapter 2 in \cite{PW}).\end{remark}

\begin{corollary}If $M$ is a surface with nonnegative Gauss curvature, then any bounded subharmonic function on $M$ must be a constant. \end{corollary}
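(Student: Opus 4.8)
The plan is to deduce the corollary from the three circles inequality of Theorem \ref{three-circles1} by a limiting argument in $r_3$. Suppose $u$ is a bounded subharmonic function on the surface $M$ with nonnegative Gauss curvature; since $\dim M = 2$, nonnegative Gauss curvature is exactly nonnegative Ricci curvature, so Theorem \ref{three-circles1} applies with $\Phi(r) = \log r$. If $u$ were nonconstant, then by Lemma \ref{lemma 1} the function $\mathrm{M}_u(r)$ is strictly increasing, so we may fix $0 < r_1 < r_2$ with $\mathrm{M}_u(r_1) < \mathrm{M}_u(r_2)$.

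The key step is to let $r_3 \to \infty$ in the inequality
\[
\mathrm{M}_u(r_2) \leq \frac{\mathrm{M}_u(r_1)\big(\log r_3 - \log r_2\big) + \mathrm{M}_u(r_3)\big(\log r_2 - \log r_1\big)}{\log r_3 - \log r_1}.
\]
Rewrite the right-hand side as
\[
\mathrm{M}_u(r_1)\cdot\frac{\log r_3 - \log r_2}{\log r_3 - \log r_1} + \mathrm{M}_u(r_3)\cdot\frac{\log r_2 - \log r_1}{\log r_3 - \log r_1}.
\]
Since $u$ is bounded, $\mathrm{M}_u(r_3) \leq \sup_M u =: B < \infty$ for all $r_3$. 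As $r_3 \to \infty$, the coefficient $\frac{\log r_3 - \log r_2}{\log r_3 - \log r_1} \to 1$ and the coefficient $\frac{\log r_2 - \log r_1}{\log r_3 - \log r_1} \to 0$. Because $\mathrm{M}_u(r_3)$ stays bounded by $B$, the second term tends to $0$, and we conclude $\mathrm{M}_u(r_2) \leq \mathrm{M}_u(r_1)$.

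This contradicts our choice $\mathrm{M}_u(r_1) < \mathrm{M}_u(r_2)$. Hence $u$ must be constant. I do not anticipate a serious obstacle here: the only points requiring minor care are (i) noting that in dimension two the Gauss curvature and Ricci curvature hypotheses coincide so that Theorem \ref{three-circles1} is genuinely applicable, and (ii) handling the limit carefully using only the uniform bound $\mathrm{M}_u(r_3) \leq B$ rather than any monotonicity or convergence of $\mathrm{M}_u(r_3)$ itself. One could alternatively phrase the argument via the convexity statement: boundedness forces the convex (in $\log r$) function $\mathrm{M}_u$ to be bounded above, and a convex function on a half-line that is bounded above must be nonincreasing, which combined with Lemma \ref{lemma 1} forces constancy; but the direct limiting computation above is the cleanest route.
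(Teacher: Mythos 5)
Your proof is correct and follows essentially the same route as the paper: both let $r_3\to\infty$ in the $n=2$ case of Theorem \ref{three-circles1}, use the uniform bound on $\mathrm{M}_u(r_3)$ to kill the second term, and deduce $\mathrm{M}_u(r_2)\leq\mathrm{M}_u(r_1)$. The only cosmetic difference is that you phrase the conclusion as a contradiction with Lemma \ref{lemma 1}, while the paper invokes the strong maximum principle directly; these are the same argument since Lemma \ref{lemma 1} is itself a restatement of that principle.
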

\begin{proof}Assume that $u$ is a bounded subharmonic function on $M$. For $0<r_1<r_2<r_3$, note that $$\lim_{r_3\rightarrow \infty}\frac{\log r_3-\log r_2}{\log r_3-\log r_1}=1,$$ and$$\lim_{r_3\rightarrow \infty}\mathrm{M}_u(r_3)\frac{\log r_2-\log r_1}{\log r_3-\log r_1}=0.$$By letting $r_3\rightarrow \infty$ in the above theorem for case $n=2$, we have immediately that $$\mathrm{M}_u(r_2)\leq \mathrm{M}_u(r_1).$$It means that $u$ attains its maximum at some interior point of $\overline{B}_p(r_2)$. So $u$ is a constant.\end{proof}

\section{Gradient shrinking Ricci solitons}

In this section, we always assume that $(M,g,f)$ is an $n$-dimensional complete gradient shrinking Ricci soliton with a proper potential $f$. Thus \begin{equation}\mathrm{Ric}+\mathrm{Hess} (f)=\frac{g}{2}.\end{equation}
We recall some basic identities of complete gradient shrinking Ricci solitons.

\begin{lemma}[\cite{H}]
Let $S$ be the scalar curvature of $M$. Then we have
$$S+\Delta f =\frac{n}{2},$$ and
$$S+|\nabla f|^2=f.$$
\end{lemma}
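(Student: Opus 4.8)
The plan is to obtain both identities directly from the soliton equation $\mathrm{Ric}+\mathrm{Hess}(f)=\tfrac12 g$ by taking traces and divergences and invoking the contracted second Bianchi identity; this is the classical computation of Hamilton \cite{H}, so I would either reproduce it or simply cite it.

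For the first identity, I would take the metric trace of the soliton equation. Since $\mathrm{tr}_g\mathrm{Ric}=S$, $\mathrm{tr}_g\mathrm{Hess}(f)=\Delta f$, and $\mathrm{tr}_g g=n$, this gives $S+\Delta f=\tfrac n2$ at once.

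For the second identity, I would take the divergence of the soliton equation. Writing it as $R_{ij}+\nabla_i\nabla_j f=\tfrac12 g_{ij}$ and applying $\nabla^j$, the contracted second Bianchi identity $\nabla^jR_{ij}=\tfrac12\nabla_iS$ together with the commutation formula $\nabla^j\nabla_i\nabla_jf=\nabla_i\Delta f+R_{ij}\nabla^jf$ yields, after substituting $\Delta f=\tfrac n2-S$ from the first identity, the relation $\nabla_iS=2R_{ij}\nabla^jf$. Next I would contract the original soliton equation with $\nabla^if$: the Ricci term becomes $\tfrac12\nabla_jS$ by the relation just derived, the Hessian term becomes $\tfrac12\nabla_j|\nabla f|^2$, and the right-hand side is $\tfrac12\nabla_j f$. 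Hence $\nabla\big(S+|\nabla f|^2-f\big)=0$, so $S+|\nabla f|^2-f$ is constant on the connected manifold $M$; normalizing $f$ by an additive constant, which leaves the soliton equation unchanged, makes this constant zero.

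There is no real obstacle here, as the argument is standard. The only points to watch are the consistency of curvature and index conventions in the commutator identity $\nabla^j\nabla_i\nabla_jf-\nabla_i\nabla^j\nabla_jf=R_{ij}\nabla^jf$, and the observation that invariance of the soliton equation under $f\mapsto f+c$ is precisely what licenses absorbing the integration constant in the last step.
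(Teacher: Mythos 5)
Your proposal is correct and is exactly the standard computation of Hamilton that the paper cites for this lemma without reproducing a proof: trace the soliton equation for the first identity, then use the contracted second Bianchi identity and the commutation formula to get $\nabla S=2\mathrm{Ric}(\nabla f,\cdot)$, and contract with $\nabla f$ to see $S+|\nabla f|^2-f$ is constant. Your remark that the additive normalization of $f$ is what kills the integration constant is the right (and often unstated) caveat, and it is implicitly assumed in the paper's statement.
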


\begin{lemma}[\cite{C}]The scalar curvature of $M$ is always nonnegative, i.e., $S\ge 0$.
\end{lemma}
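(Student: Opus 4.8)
This is a theorem of B.-L. Chen, and in the stated generality the quickest route is to observe that a complete gradient shrinking Ricci soliton, via its canonical form $g(t)=(1-t)\,\varphi_t^{*}g$ for $t<1$, is a complete ancient solution of the Ricci flow, to which Chen's theorem that $R(\cdot,t)\ge 0$ applies; evaluating at $t=0$ gives $S\ge 0$. Since the rest of the paper uses only soliton structure, the alternative I would spell out is the self-contained maximum-principle argument underlying Chen's result for shrinkers.

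The plan is as follows. Write $\Delta_f:=\Delta-\langle\nabla f,\nabla\,\cdot\,\rangle$. Taking the divergence of the soliton equation $\mathrm{Ric}+\mathrm{Hess}(f)=\tfrac12 g$ and using the contracted second Bianchi identity $\mathrm{div}\,\mathrm{Ric}=\tfrac12\nabla S$ together with $\Delta f=\tfrac{n}{2}-S$, one first obtains $\nabla S=2\,\mathrm{Ric}(\nabla f,\cdot)$; taking the divergence once more and contracting against the soliton equation yields the divergence identity
$$\Delta_f S=S-2|\mathrm{Ric}|^{2}\le S-\tfrac{2}{n}S^{2},$$
the last step using $|\mathrm{Ric}|^{2}\ge\tfrac1n S^{2}$. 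The same two trace identities give the barrier identity $\Delta_f f=\Delta f-|\nabla f|^{2}=\tfrac{n}{2}-f$. Now suppose $S(x_0)<0$ for some $x_0\in M$, and for small $\delta>0$ set $\varphi_\delta:=S+\delta f$. Since $f$ is proper and bounded below and $S$ is bounded below (see the last paragraph), $\varphi_\delta$ attains an interior minimum at some $x_\delta$; there $\nabla S(x_\delta)=-\delta\nabla f(x_\delta)$ and $\Delta\varphi_\delta(x_\delta)\ge 0$, hence $\Delta_f\varphi_\delta(x_\delta)\ge 0$, which combined with the two identities gives $\tfrac2n S(x_\delta)^{2}-S(x_\delta)\le\delta\bigl(\tfrac n2-f(x_\delta)\bigr)\le C\delta$. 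Minimality also forces $S(x_\delta)\le S(x_0)+\delta f(x_0)$, so $\{S(x_\delta)\}$ is bounded; letting $\delta\to0$ along a subsequence with $S(x_\delta)\to L$ gives $\tfrac2n L^{2}-L\le0$, i.e. $L\in[0,\tfrac n2]$, contradicting $L\le S(x_0)<0$.

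The only genuine obstacle is the noncompactness of $M$: $\inf_M S$ need not be attained, which is exactly why one introduces the exhaustion-type barrier $f$ (with its clean equation $\Delta_f f=\tfrac n2-f$), and why the argument needs, as input, that $S$ — equivalently, by $S=f-|\nabla f|^{2}$, that $f$ — is bounded below. This boundedness belongs to the standard structure theory of complete shrinking solitons; in view of that, and since the whole statement is classical, it is entirely appropriate for the paper to simply cite \cite{C} here rather than reproduce the proof.
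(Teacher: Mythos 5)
The paper offers no proof here at all; it simply cites Chen's theorem, and your opening sentence (a complete shrinker gives a self-similar complete ancient solution via its canonical form, to which Chen's $R\ge 0$ theorem applies) is precisely the intended justification. The identities in your self-contained sketch are also all correct: $\Delta_f S=S-2|\mathrm{Ric}|^2$, $\Delta_f f=\tfrac n2-f$, and perturbing by $\delta f$ is the standard device for noncompact shrinkers.

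However, as a stand-alone proof the sketch has a genuine gap, which you only half-acknowledge: it takes as input that $\inf_M S>-\infty$, and for a complete \emph{noncompact} shrinker this is not an independently available piece of ``structure theory'' --- it is essentially the theorem itself, and is normally obtained only through Chen's localized maximum-principle argument (or a comparable localization, e.g.\ via the weighted/Omori--Yau principle as in Pigola--Rimoldi--Setti). Your parenthetical claim that $S$ bounded below is ``equivalent'' to $f$ bounded below is wrong in the direction you need: $f$ is automatically bounded below (it is proper and continuous), yet this gives no lower bound on $S$, since $|\nabla f|^2=f-S$ may a priori exceed $f$. Likewise the Cao--Zhou upper bound $f\le\tfrac14\bigl(d+c\bigr)^2$, the natural tool one might reach for, is itself derived from $|\nabla f|\le\sqrt f$, i.e.\ from $S\ge 0$, so invoking it here would be circular. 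In short: the barrier argument is fine once a lower bound on $S$ is known (e.g.\ it upgrades $S\ge -C$ to $S\ge 0$), but it does not produce that lower bound, which is exactly where the noncompactness bites; the honest proof of the lemma is the citation.
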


\begin{lemma}[\cite{CZ}]Fix $p\in M$. Then there are positive constant $c$ and $r_0$ such that
 $$\frac{1}{4}\Big(d(x, p)-c\Big)^2\leq f(x)\leq \frac{1}{4}\Big(d(x, p)+c\Big)^2$$ whenever $d(x,p)\geq r_0$.
\end{lemma}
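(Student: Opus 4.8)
The plan is to follow Cao and Zhou \cite{CZ}: reduce to a minimum point of $f$, obtain the upper bound from a one-line gradient estimate, and obtain the lower bound by integrating the soliton equation along minimal geodesics, with the second variation of arc length controlling the Ricci term. First I would reduce to the case in which $p$ is a point at which $f$ attains its minimum. Such a point $p_0$ exists: by the identities $S+\Delta f=\frac{n}{2}$ and $S+|\nabla f|^2=f$ (from \cite{H}) together with $S\ge 0$ (from \cite{C}), we have $f=S+|\nabla f|^2\ge 0$, so $f$ is bounded below, and since $f$ is proper it attains a minimum at some $p_0\in M$, where necessarily $\nabla f(p_0)=0$. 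Since $|d(x,p)-d(x,p_0)|\le d(p,p_0)$ for every $x$, the two-sided estimate for $p$ follows from the one for $p_0$ after replacing $c$ by $c+d(p,p_0)$; so from now on I may take $p=p_0$.

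For the upper bound, the same identities give $|\nabla f|^2=f-S\le f$, hence $|\nabla\sqrt{f+\varepsilon}|=\frac{|\nabla f|}{2\sqrt{f+\varepsilon}}<\frac12$ for every $\varepsilon>0$. Integrating this along a minimal geodesic from $p_0$ to $x$ and letting $\varepsilon\to 0^+$ yields $\sqrt{f(x)}\le\sqrt{f(p_0)}+\frac12 d(x,p_0)$, which is the upper half of the estimate with $c=2\sqrt{f(p_0)}$; no restriction on $d(x,p_0)$ is needed here, so the threshold $r_0$ enters only through the lower bound. For the lower bound I would fix $x$ with $s_0:=d(x,p_0)$ large, choose a unit-speed minimal geodesic $\gamma:[0,s_0]\to M$ from $p_0$ to $x$, and set $\varphi(s)=f(\gamma(s))$. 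Evaluating the soliton equation $\mathrm{Ric}+\mathrm{Hess}(f)=\frac12 g$ on the pair $(\gamma',\gamma')$ gives $\varphi''(s)=\frac12-\mathrm{Ric}(\gamma',\gamma')(s)$, and integrating over $[0,s_0]$ (using $\varphi'(0)=\langle\nabla f(p_0),\gamma'(0)\rangle=0$) gives
\[
\varphi'(s_0)=\frac{s_0}{2}-\int_0^{s_0}\mathrm{Ric}(\gamma',\gamma')\,ds.
\]

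The crux is to bound $\int_0^{s_0}\mathrm{Ric}(\gamma',\gamma')\,ds$ from above by a constant independent of $x$. Here I would invoke that, since $\gamma$ is minimal, the second variation of arc length is nonnegative; summing the index form over a parallel orthonormal frame orthogonal to $\gamma'$ yields $\int_0^{s_0}\phi^2\,\mathrm{Ric}(\gamma',\gamma')\,ds\le(n-1)\int_0^{s_0}(\phi')^2\,ds$ for every piecewise smooth $\phi$ with $\phi(0)=\phi(s_0)=0$. Taking $\phi$ to be a tent function (linear on $[0,1]$ and on $[s_0-1,s_0]$, identically $1$ in between) controls the contribution near $p_0$ by a constant depending only on $n$ and the geometry of $g$ on the unit ball about $p_0$; near $x$ one rewrites $\mathrm{Ric}(\gamma',\gamma')=\frac12-\varphi''$ and integrates by parts, the resulting boundary terms in $\varphi$ and $\varphi'$ being controlled by the upper bound just proved and by $|\varphi'|\le|\nabla f|\le\sqrt{f}$. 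This should give $\varphi'(s_0)\ge\frac{s_0}{2}-c_1$ for $s_0$ large, and integrating once more in $s$ yields $f(x)=\varphi(s_0)\ge\frac14 s_0^2-c_1 s_0-c_2\ge\frac14(s_0-c)^2$ once $s_0\ge r_0$.

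The hard part will be producing the \emph{sharp} constant $\frac14$ in the lower bound. Applied crudely, the cut-off argument above only yields $f(x)\ge C(s_0-c)^2$ with some $C<\frac14$; recovering the optimal constant requires a careful exploitation of the interplay between the ordinary differential equation $\varphi''=\frac12-\mathrm{Ric}(\gamma',\gamma')$ and the comparison inequality — in practice, first establishing a quadratic lower bound of the correct order and then bootstrapping, using that $f\to\infty$ along $\gamma$ to tighten the estimate of the Ricci integral near $x$. By contrast, the upper bound and the reductions are routine.
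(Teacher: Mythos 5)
The paper does not prove this lemma; it quotes it from Cao--Zhou \cite{CZ}, so your proposal has to be measured against the argument of that reference, which is indeed the one you set out to reconstruct. Your reduction to a minimum point $p_0$ of $f$ (legitimate here, since the paper assumes $f$ proper and the identities give $f=S+|\nabla f|^2\ge 0$, hence $\nabla f(p_0)=0$) and your upper bound via $|\nabla\sqrt{f+\varepsilon}|<\tfrac12$ are correct and complete. The second-variation inequality and the choice of tent function for the lower bound are also the right ingredients.

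The gap is in the last step of the lower bound, precisely where you hedge --- but your diagnosis of the difficulty is wrong. Write $\varphi(s)=f(\gamma(s))$. Integrating $\varphi''=\tfrac12-\mathrm{Ric}(\gamma',\gamma')$ gives $\varphi'(s_0)=\tfrac{s_0}{2}-\int_0^{s_0}\mathrm{Ric}(\gamma',\gamma')\,ds$, while your integration by parts of $\int_{s_0-1}^{s_0}(1-\phi^2)\varphi''\,ds$ produces the boundary term $+\varphi'(s_0)$ inside the upper bound for $\int_0^{s_0}\mathrm{Ric}(\gamma',\gamma')\,ds$. When you substitute, the two occurrences of $\varphi'(s_0)$ cancel identically, so the argument does \emph{not} yield the pointwise bound $\varphi'(s_0)\ge\tfrac{s_0}{2}-c_1$ that you then propose to integrate in $s$. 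What survives the cancellation is only $\int_{s_0-1}^{s_0}2(s_0-s)\varphi'(s)\,ds\ \ge\ \tfrac{s_0}{2}-c_1$, a lower bound on a weighted average of $\varphi'=\langle\nabla f,\gamma'\rangle$ of total weight $1$ over the last unit segment. The correct finish (and the one in \cite{CZ}) is: since $\varphi'(s)\le|\nabla f|(\gamma(s))\le\sqrt{f(\gamma(s))}$, this forces $\sqrt{f(\gamma(s))}\ge\tfrac{s_0}{2}-c_1$ for \emph{some} $s\in[s_0-1,s_0]$, and the $\tfrac12$-Lipschitz bound on $\sqrt f$ already established transports this to $x$ at the cost of an extra $\tfrac12$, giving $\sqrt{f(x)}\ge\tfrac12(s_0-c)$. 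Squaring yields the sharp constant $\tfrac14$ outright: no bootstrap is needed, and the loss of constant you worry about in your final paragraph only occurs if one insists on estimating $f$ instead of $\sqrt f$. A minor further point: your constants depend on $\sup_{B(p_0,1)}|\mathrm{Ric}|$, which suffices for the lemma as stated, but not for the paper's follow-up remark that $c$ and $r_0$ depend only on $n$; that refinement needs $f(p_0)=S(p_0)\le n/2$ and a different handling of the segment near $p_0$.
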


Indeed, both constants $c$ and $r_0$ above can be chosen to depend only on the dimension $n$.

In this section, we denote $\rho=2\sqrt{f}$. For any $r>0$, denote $D_r=\{q\in M|\rho(q)\leq r\}$. Given a subharmonic function $u$ on $M$, we set $$\mathrm{M}_u(r)=\max_{D_r}u.$$ If $u$ is not a constant, then for any $0<r_1<r_2$, $\mathrm{M}_u(r_1)<\mathrm{M}_u(r_2)$. As before, we set  \[\Phi(r)=\Phi_n(r)=\begin{cases}\log r,&\text{if }n=2;\\[3pt]\frac{r^{2-n}}{2-n},&\text{if }n>2.\end{cases}\]

\begin{lemma}\label{lemma 3}There exists $r_0>0$ such that $$\Delta \Phi(\rho)\leq 0$$whenever $\rho\geq r_0$. \end{lemma}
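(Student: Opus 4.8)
The strategy is to compute $\Delta \Phi(\rho)$ directly via the chain rule and then estimate each resulting term using the soliton identities and the asymptotics of $f$. Writing $\rho = 2\sqrt f$, a direct computation gives
\[
\Delta \Phi(\rho) = \Phi''(\rho)\,|\nabla\rho|^2 + \Phi'(\rho)\,\Delta\rho,
\]
so I first need formulas for $|\nabla\rho|^2$ and $\Delta\rho$. Since $\rho^2 = 4f$, we get $2\rho\nabla\rho = 4\nabla f$, hence $|\nabla\rho|^2 = 4|\nabla f|^2/\rho^2 = |\nabla f|^2/f$, and by the soliton identity $S + |\nabla f|^2 = f$ this equals $1 - S/f$. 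Differentiating once more, $\Delta(\rho^2) = 4\Delta f$, and expanding $\Delta(\rho^2) = 2\rho\Delta\rho + 2|\nabla\rho|^2$ together with $S + \Delta f = n/2$ yields an explicit expression for $\Delta\rho$ in terms of $S$, $f$, and $n$. Substituting $\Phi'(r) = r^{1-n}$ and $\Phi''(r) = (1-n)r^{-n}$ then reduces the claim to a pointwise inequality among $S$, $f$ (equivalently $\rho$), and $n$.

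The second step is to control the error terms using the region $\rho \geq r_0$. Here I would use Lemma 3.4 (the Cao–Zhou estimate), which says $f$ is comparable to $\frac14 d(\cdot,p)^2$ at large distance — equivalently $\rho$ is comparable to $d(\cdot,p)$ — so that $\rho\to\infty$ forces $f\to\infty$; and I would use $S \geq 0$ (Lemma 3.3). The key mechanism is that in the expression for $\Delta\Phi(\rho)$ the dominant term carries the factor $\Phi''(\rho)|\nabla\rho|^2 = (1-n)\rho^{-n}(1-S/f)$, which is negative (since $n\geq 2$ and, for $\rho$ large, $S/f$ is small so $1-S/f>0$), while the remaining terms involving $\Delta\rho$ are lower order in $1/\rho$ — roughly of size $O(\rho^{-n-1})$ times bounded quantities — because $\Delta\rho$ itself is $O(1/\rho)$ up to contributions controlled by $S$ and $S/f$. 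Choosing $r_0$ large enough that the negative leading term dominates all the rest gives $\Delta\Phi(\rho)\leq 0$ on $\{\rho\geq r_0\}$.

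The main obstacle is making the term-by-term estimate genuinely clean: one must verify that the combination of $S$-dependent terms (which are only known to be nonnegative, not bounded above a priori) still gets absorbed. The saving point is that $S$ enters the formula for $\Delta\rho$ in the combination $\tfrac{n}{2}-S$ from $S+\Delta f = n/2$, so after substitution the $S$-terms reorganize and the relevant quantity is $\tfrac{n}{2} - S$ divided by powers of $\rho$; combined with $|\nabla\rho|^2 = 1 - S/f \le 1$ and $f = \rho^2/4$, the whole expression collapses to something of the form $\rho^{-n}$ times a function of $n$ and $S/f$ that is manifestly $\le 0$ once $\rho$ is large. I would also need to check smoothness of $\rho = 2\sqrt f$ away from a compact set, which follows since $f$ is proper and hence positive outside a compact set (its minimum set being compact), so $\sqrt f$ is smooth there and no cut-locus subtleties arise — a genuine advantage over the distance function used in Section 2.
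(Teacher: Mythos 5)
Your plan is the same as the paper's: expand $\Delta\Phi(\rho)=\Phi''(\rho)|\nabla\rho|^2+\Phi'(\rho)\Delta\rho$, use the Hamilton identities $S+\Delta f=\tfrac n2$ and $S+|\nabla f|^2=f$ together with $f=\rho^2/4$ to get closed-form expressions, and conclude from $S\geq 0$; carried out, this does yield $\Delta\Phi(\rho)=S\,\rho^{-n-2}(\mathrm{const}\cdot n-\mathrm{const}\cdot\rho^2)\leq 0$ for $\rho$ large, exactly as in the paper. One step of your narrative is wrong, though, and would sink the argument if taken literally: the term $\Phi'(\rho)\Delta\rho$ is \emph{not} lower order $O(\rho^{-n-1})$ --- since $\Delta\rho=\tfrac{n-1}{\rho}+O(S/\rho)$ and $\Phi'(\rho)=\rho^{1-n}$, it contributes $+\tfrac{n-1}{\rho^n}$, which is the same order as, and opposite in sign to, your ``dominant'' term $\Phi''(\rho)|\nabla\rho|^2\approx\tfrac{1-n}{\rho^n}$. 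No choice of $r_0$ makes one dominate the other; the proof works only because these two $\rho^{-n}$ terms cancel \emph{exactly} (this is precisely why $\Phi$ is chosen to be the Euclidean fundamental-solution profile), leaving a remainder proportional to $S$ whose sign you then read off. Your closing sentence about the expression ``collapsing'' to $\rho^{-n}$ times a function of $S/f$ describes the correct mechanism, so the computation would rescue you, but you should replace the dominance/absorption heuristic by the cancellation. Also, the Cao--Zhou asymptotics and any smallness of $S/f$ are not needed: $1-S/f=|\nabla f|^2/f\geq 0$ holds identically, and the final sign only uses $S\geq 0$ and $\rho^2\gtrsim n$.
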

\begin{proof}Notice that$$
\nabla_i \rho=\nabla_i (2\sqrt{f})=\frac{\nabla_if}{\sqrt{f}},$$
and\begin{align*}
\Delta \rho&=\nabla^i\nabla_i\rho\\[3pt]
&=\nabla^i\Big(\frac{\nabla_if}{\sqrt{f}}\Big)\\[3pt]
&=\frac{\sqrt{f}\nabla^i\nabla_if-(\nabla_if)(\nabla^i\sqrt{f})}{f}\\[3pt]
&=\frac{\sqrt{f}\Delta f-\frac{|\nabla f|^2}{2\sqrt{f}}}{f}\\[3pt]
&=\frac{\frac{\rho}{2}(\frac{n}{2}-S)-\frac{\frac{\rho^2}{4}-S}{\rho}}{\frac{\rho^2}{4}}\\[3pt]
&=\frac{n-1}{\rho}-\frac{(2\rho^2-4)S}{\rho^3}.
\end{align*}
So
\begin{align*}
\Delta \Phi(\rho)&=\nabla^i\nabla_i\Phi(\rho)\\[3pt]
&=\nabla^i\big(\Phi'( \rho)\nabla_i\rho\big)\\[3pt]
&=\Phi''( \rho)\nabla^i\rho\nabla_i\rho+ \Phi'(\rho)\nabla^i\nabla_i\rho\\[3pt]
&=\Phi''( \rho)|\nabla \rho|^2+\Phi'(\rho)\Delta\rho\\[3pt]
&=\frac{1-n}{\rho^n}|\nabla \rho|^2+\frac{1}{\rho^{n-1}}\Delta\rho\\[3pt]
&=\frac{1-n}{\rho^n}\frac{|\nabla f|^2}{f}+\frac{1}{\rho^{n-1}}\Big(\frac{n-1}{\rho}-\frac{(2\rho^2-4)S}{\rho^3}\Big)\\[3pt]
&=\frac{1-n}{\rho^n}\frac{f-S}{f}+\frac{1}{\rho^{n-1}}\Big(\frac{n-1}{\rho}-\frac{(2\rho^2-4)S}{\rho^3}\Big)\\[3pt]
&=\frac{1-n}{\rho^n}\frac{\frac{\rho^2}{4}-S}{\frac{\rho^2}{4}}+\frac{1}{\rho^{n-1}}\Big(\frac{n-1}{\rho}-\frac{(2\rho^2-4)S}{\rho^3}\Big)\\[3pt]
&=\frac{S(4n-\rho^2)}{\rho^{n+2}}.
\end{align*}Since $S\geq 0$, $\Delta \Phi(\rho)\leq 0$ whenever $\rho\geq 2\sqrt{n}$.\end{proof}

\begin{theorem}\label{three-circles2}Let $u$ be a subharmonic function on $M$. There exists $r_0>0$ such that for any $r_0\leq r_1<r_2<r_3$, we have $$\mathrm{M}_u(r_2)\leq\frac{\mathrm{M}_u(r_1)\big(\Phi(r_3)-\Phi(r_2)\big)+\mathrm{M}_u(r_3)\big(\Phi(r_2)-\Phi(r_1)\big)}{\Phi(r_3)-\Phi(r_1)}.$$ \end{theorem}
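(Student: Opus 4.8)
The plan is to mimic the proof of Theorem~\ref{three-circles1} almost verbatim, replacing the distance function $\rho=d(p,\cdot)$ and Lemma~\ref{lemma 2} by the rescaled potential $\rho=2\sqrt f$ and Lemma~\ref{lemma 3}. Fix $r_0>0$ as in Lemma~\ref{lemma 3} so that $\Delta\Phi(\rho)\le 0$ on $\{\rho\ge r_0\}$, and take $r_0\le r_1<r_2<r_3$. Set $A(r_1,r_3)=\{q\in M\mid r_1\le\rho(q)\le r_3\}$, and define
$$G=\big(\Phi(r_3)-\Phi(r_1)\big)u,\qquad F=\big(\mathrm{M}_u(r_3)-\mathrm{M}_u(r_1)\big)\Phi(\rho)+\mathrm{M}_u(r_1)\Phi(r_3)-\mathrm{M}_u(r_3)\Phi(r_1)$$
on $A(r_1,r_3)$. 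As in the earlier proof, the stated inequality follows once we show $G\le F$ on $A(r_1,r_3)$, by evaluating at a point of $\partial D_{r_2}$ where $u$ attains $\mathrm{M}_u(r_2)$ (note $\mathrm{M}_u$ is non-decreasing, and strictly increasing for nonconstant $u$ by the strong maximum principle, Proposition~\ref{strong maximum principle}; the constant case is trivial).

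To prove $G\le F$, I would argue by contradiction: suppose $G-F$ is positive somewhere on $A(r_1,r_3)$. Since $f$ is proper, $D_r$ is compact for every $r$, so $A(r_1,r_3)$ is compact and $G-F$ attains its maximum at some $\tilde q\in A(r_1,r_3)$ with $(G-F)(\tilde q)>0$. On $\partial D_{r_1}$ we have $(G-F)(q)=\big(\Phi(r_3)-\Phi(r_1)\big)\big(u(q)-\mathrm{M}_u(r_1)\big)\le 0$, and similarly $(G-F)|_{\partial D_{r_3}}\le 0$, so $\tilde q$ is an interior point of $A(r_1,r_3)$, i.e.\ $r_1<\rho(\tilde q)<r_3$. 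Here a key simplification over Theorem~\ref{three-circles1} is that $\rho=2\sqrt f$ is \emph{smooth} on $M$ (indeed $f>0$ for $\rho\ge r_0>0$), so there is no cut-locus issue and no need for the $\varepsilon$-perturbation or the second-point trick; in particular $F=\big(\mathrm{M}_u(r_3)-\mathrm{M}_u(r_1)\big)\Phi(\rho)+\text{const}$ is smooth near $\tilde q$. At the interior maximum $\tilde q$ we get $\Delta(G-F)(\tilde q)\le 0$. On the other hand $\Delta G=\big(\Phi(r_3)-\Phi(r_1)\big)\Delta u\ge 0$ by subharmonicity, and since $\mathrm{M}_u(r_3)-\mathrm{M}_u(r_1)\ge 0$ while $\rho(\tilde q)\ge r_1\ge r_0$, Lemma~\ref{lemma 3} gives $\Delta F(\tilde q)=\big(\mathrm{M}_u(r_3)-\mathrm{M}_u(r_1)\big)\Delta\Phi(\rho)(\tilde q)\le 0$, hence $\Delta(G-F)(\tilde q)\ge 0$.

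This only yields $\Delta(G-F)(\tilde q)=0$, not the strict contradiction, so the remaining issue is to upgrade to a genuine contradiction; this is the main (and only mildly delicate) obstacle. I see two ways around it. The cleanest is to apply the strong maximum principle to $G-F$ directly: on the open region $\{r_1<\rho<r_3\}$ we have $\Delta(G-F)\ge 0$, so $G-F$ is subharmonic there, and if it attains an interior maximum it must be locally constant near $\tilde q$; propagating this (the region is connected, and $G-F$ is continuous up to the boundary where it is $\le 0$) forces $(G-F)(\tilde q)\le 0$, contradicting $(G-F)(\tilde q)>0$. Alternatively, one can re-run the $\varepsilon$-perturbation exactly as in the proof of Theorem~\ref{three-circles1}, replacing $\Phi(\rho)$ by $\Phi(\rho-\varepsilon)$ and checking that $\Delta\Phi(\rho-\varepsilon)<0$ strictly for $\rho$ large (the computation in Lemma~\ref{lemma 3} perturbs to give a strictly negative right-hand side, after possibly enlarging $r_0$), which makes $\Delta(G-F_\varepsilon)(\tilde q_\varepsilon)>0$ strictly and then letting $\varepsilon\to0^+$. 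I would present the strong-maximum-principle version, since it avoids recomputing $\Delta\Phi(\rho-\varepsilon)$ and exploits that the relevant region is already cut-locus-free.
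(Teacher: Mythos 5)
Your argument is correct and is essentially the paper's proof: your $G-F$ is just $\big(\Phi(r_3)-\Phi(r_1)\big)$ times the paper's comparison function $v=u-a\Phi(\rho)-b$, which the paper likewise shows is subharmonic on the annulus via Lemma~\ref{lemma 3} and then handles with the strong maximum principle (Proposition~\ref{strong maximum principle}), exactly your preferred resolution of the ``only $\Delta(G-F)(\tilde q)=0$'' issue. The $\varepsilon$-perturbation alternative you mention is not needed and is not what the paper does.
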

\begin{proof}Let $r_0=2\sqrt{n}$. For any $r_0\leq r_1<r_3$, we denote $$A_\rho(r_1,r_3)=\{q\in M|r_1\leq\rho (q)\leq r_3\}.$$Define a function $v$ on $A_\rho(r_1,r_3)$ by $$v=u-a\Phi(\rho)-b,$$ where $$a=\frac{\mathrm{M}_u(r_1)-\mathrm{M}_u(r_3)}{\Phi(r_1)-\Phi(r_3)}>0,$$ and $$b=\mathrm{M}_u(r_1)-\Phi(r_1)\frac{\mathrm{M}_u(r_1)-\mathrm{M}_u(r_3)}{\Phi(r_1)-\Phi(r_3)}.$$Then by Lemma \ref{lemma 3},$$\Delta v=\Delta u-a\Delta\Phi(\rho)\geq 0.$$ Moreover, we have $$v|_{\rho^{-1}(r_1)}\leq 0$$ and $$v|_{\rho^{-1}(r_3)}\leq 0$$Applying Proposition \ref{strong maximum principle} to $v$, we get the desired result.\end{proof}

\begin{lemma}\label{Lemma 4}Let $\Psi(r)=\log\log r$. If $S\geq \frac{n-2}{2}$, then there exists a constant $C_n$ depending only on $n$ such that $\Delta\Psi(\rho)\leq 0$ whenever $\rho \geq C_n$.  \end{lemma}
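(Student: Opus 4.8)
The plan is to compute $\Delta\Psi(\rho)$ directly and pointwise, exactly as in the proof of Lemma~\ref{lemma 3}. Since $\rho=2\sqrt f$ is smooth wherever $f>0$, and in particular throughout $\{\rho\ge C_n\}$, there are no cut-locus issues to contend with, and the chain rule gives
$$\Delta\Psi(\rho)=\Psi''(\rho)|\nabla\rho|^2+\Psi'(\rho)\Delta\rho,$$
with $\Psi'(r)=\frac{1}{r\log r}$ and $\Psi''(r)=-\frac{1+\log r}{r^2(\log r)^2}$. Into this I would substitute the two ingredients already recorded in the proof of Lemma~\ref{lemma 3}: the identity $|\nabla\rho|^2=\frac{|\nabla f|^2}{f}=1-\frac{4S}{\rho^2}$ (using $S+|\nabla f|^2=f$ and $f=\rho^2/4$) and $\Delta\rho=\frac{n-1}{\rho}-\frac{(2\rho^2-4)S}{\rho^3}$.

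Collecting terms and factoring out $\frac{1}{\rho^2\log\rho}$, I expect the computation to reduce to
$$\Delta\Psi(\rho)=\frac{1}{\rho^2\log\rho}\left[(n-2)-2S\left(1-\frac{4}{\rho^2}-\frac{2}{\rho^2\log\rho}\right)-\frac{1}{\log\rho}\right].$$
As a sanity check, on the Gaussian soliton ($S\equiv 0$, with $\rho$ the Euclidean radius) this collapses to $\Delta\log\log r=\frac{1}{r^2\log r}\left((n-2)-\frac{1}{\log r}\right)$, which is positive for large $r$ when $n\ge 3$; this is exactly why the hypothesis $S\ge\frac{n-2}{2}$ is indispensable. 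Since $\frac{1}{\rho^2\log\rho}>0$ for $\rho>1$, it remains to show that the bracket is nonpositive once $\rho\ge C_n$.

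This is where the assumption $S\ge\frac{n-2}{2}$ enters. For $\rho\ge 4$ the factor $1-\frac{4}{\rho^2}-\frac{2}{\rho^2\log\rho}$ is positive, so from $2S\ge n-2\ge 0$ one gets
$$2S\left(1-\frac{4}{\rho^2}-\frac{2}{\rho^2\log\rho}\right)\ge(n-2)\left(1-\frac{4}{\rho^2}-\frac{2}{\rho^2\log\rho}\right),$$
and hence the bracket is at most $(n-2)\left(\frac{4}{\rho^2}+\frac{2}{\rho^2\log\rho}\right)-\frac{1}{\log\rho}\le\frac{6(n-2)}{\rho^2}-\frac{1}{\log\rho}$ for $\rho\ge e$. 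This last quantity is $\le 0$ as soon as $\rho^2\ge 6(n-2)\log\rho$, which holds whenever $\rho\ge 6(n-2)$, since then $\rho^2\ge 6(n-2)\rho\ge 6(n-2)\log\rho$. Thus $C_n=\max\{6(n-2),\,4\}$ works; when $n=2$ the constant $(n-2)$ drops out and the bracket is already negative for $\rho\ge 4$.

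The calculation behind the displayed formula is routine. The one step that requires care — and the place I would expect to slip — is the bookkeeping: one must resist bounding the $S$-terms crudely and instead keep the correction factor $1-\frac{4}{\rho^2}-\frac{2}{\rho^2\log\rho}$ intact, because its positivity for large $\rho$ is precisely what allows the weak lower bound $2S\ge n-2$ to cancel the leading constant $(n-2)$, leaving behind only an $O(\rho^{-2})$ term against $-\frac{1}{\log\rho}$. It is worth noting that no upper bound on $S$ is used anywhere, which is fortunate since scalar-curvature upper bounds on shrinking solitons are delicate.
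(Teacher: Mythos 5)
Your proposal is correct, and the computation is the same as the paper's: both reduce $\Delta\Psi(\rho)=\Psi''(\rho)|\nabla\rho|^2+\Psi'(\rho)\Delta\rho$ via the identities $|\nabla\rho|^2=1-\frac{4S}{\rho^2}$ and $\Delta\rho=\frac{n-1}{\rho}-\frac{(2\rho^2-4)S}{\rho^3}$ to an expression whose sign must be controlled (your bracketed formula is exactly the paper's numerator $(n-2-2S)\rho^2\log\rho-\rho^2+8S\log\rho+4S$ divided by $\rho^2\log\rho$). The only real divergence is the endgame: the paper splits into the cases $\frac{n-2}{2}\le S\le\frac{n}{2}$ and $S>\frac{n}{2}$, invoking the a priori bound $S\le\frac{\rho^2}{4}$ (from $S+|\nabla f|^2=f$) to handle large $S$, and arrives at $C_n=5n$; you instead observe that for $\rho\ge 4$ the coefficient of $S$ in the bracket is negative, so the expression is monotone decreasing in $S$ and only the hypothesis $2S\ge n-2$ is needed, yielding $C_n=\max\{6(n-2),4\}$. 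Your version avoids the case split and the upper bound on $S$ entirely, which is a slightly cleaner way to close the argument; the constants differ but both are admissible for the statement as posed.
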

\begin{proof}Note that\begin{align*}
\Delta\Psi(\rho)&=\Psi''(\rho)\frac{\frac{\rho^2}{4}-S}{\frac{\rho^2}{4}}+\Psi'(\rho)\Big(\frac{n-1}{\rho}-\frac{(2\rho^2-4)S}{\rho^3}\Big)\\[3pt]
&=-\frac{(\log \rho+1)(\rho^2-4S)}{\rho^2(\log \rho)^2}+\frac{1}{\rho\log \rho}\Big(\frac{n-1}{\rho}-\frac{(2\rho^2-4)S}{\rho^3}\Big)\\[3pt]
&=\frac{1}{\rho^4(\log \rho)^2}\big((n-2-2S)\rho^2\log \rho-\rho^2+8S\log \rho+4S\big).
\end{align*}For any $q\in M$, if $\frac{n-2}{2}\leq S(q)\leq \frac{n}{2}$, then $n-2-2S(q)\leq 0$ and $$8S(q)\log \rho(q)+4S(q)\leq \frac{8n}{2}\log \rho(q)+\frac{4n}{2}\leq 4n\rho(q)+2n.$$Hence, when $\rho(q)>5n$, we have $$\Delta\Psi(\rho)(q)\leq \frac{1}{\rho^4(q)(\log \rho(q))^2}(-\rho^2(q)+4n\rho(q)+2n)\leq 0.$$

If $ S(q)>\frac{n}{2}$, then $$(n-2-2S(q))\rho^2(q)\log \rho(q)<-2\rho^2(q)\log \rho(q).$$Note that we always have $S<\frac{\rho^2}{4}$. So\begin{align*}
\lefteqn{(n-2-2S(q))\rho^2(q)\log \rho(q)-\rho^2(q)+8S(q)\log \rho(q)+4S(q)}\\[3pt]
&\leq -2\rho^2(q)\log \rho(q)-\rho^2(q)+2\rho^2(q)\log \rho(q)+\rho^2(q)\\[3pt]
&=0.
\end{align*}This completes our proof.\end{proof}

\begin{example}For $n\geq 3$, let $\mathbb{S}^{n-1}$ be the unit $(n-1)$-sphere, and $g_{can}$ the canonical round metric on $\mathbb{S}^{n-1}$. We endow the cylinder $\mathbb{S}^{n-1}\times \mathbb{R}$ with the product metric $$g(x,t)=dt^2+2(n-2)g_{can}(x),$$where $x\in \mathbb{S}^{n-1}$ and $t\in \mathbb{R}$. Then $(\mathbb{S}^{n-1}\times \mathbb{R},g)$ is a gradient shrinking Ricci soliton with the potential function $$f(x,t)=\frac{t^2}{4}+\frac{n}{2}.$$ Its scalar curvature $S=\frac{n-1}{2}$ satisfies the condition of the above lemma.\end{example}

\begin{theorem}\label{three-circles3}Suppose that $S\geq \frac{n-2}{2}$. Then for any subharmonic function $u$ on $M$ and any $5n<r_1<r_2<r_3$, we have \begin{equation}\label{Soliton}\mathrm{M}_u(r_2)\leq\frac{\mathrm{M}_u(r_1)\big(\log \log r_3- \log \log r_2 \big)+\mathrm{M}_u(r_3)\big(\log \log r_2 - \log \log r_1 \big)}{ \log \log  r_3 -\log \log  r_1}.\end{equation} \end{theorem}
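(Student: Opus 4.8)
The plan is to run the proof of Theorem~\ref{three-circles2} with the function $\Psi(r)=\log\log r$ in the role of $\Phi$ and Lemma~\ref{Lemma 4} in the role of Lemma~\ref{lemma 3}. The only features of $\Phi$ actually used in the proof of Theorem~\ref{three-circles2} are that $\Phi$ is increasing and that $\Delta\Phi(\rho)\le 0$ for $\rho$ large; both persist for $\Psi$, which is increasing on $(1,\infty)$ and, by Lemma~\ref{Lemma 4}, satisfies $\Delta\Psi(\rho)\le 0$ once $\rho>5n$ — this last point being exactly where the hypothesis $S\ge\frac{n-2}{2}$ enters. As in that proof, everything stays clean because $\rho=2\sqrt f$ is smooth on $\{f>0\}$ and the sublevel sets $D_r$ are compact (properness of $f$), so — in contrast with Theorem~\ref{three-circles1} — no $\varepsilon$-perturbation or cut-locus bookkeeping is needed.

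Concretely, I may assume $u$ is nonconstant (otherwise both sides of \eqref{Soliton} equal the constant value of $u$). Fix $5n<r_1<r_3$, let $A_\rho(r_1,r_3)=\{q\in M\mid r_1\le\rho(q)\le r_3\}$, which is compact, and set
$$a=\frac{\mathrm{M}_u(r_1)-\mathrm{M}_u(r_3)}{\Psi(r_1)-\Psi(r_3)},\qquad b=\mathrm{M}_u(r_1)-a\,\Psi(r_1),\qquad v=u-a\,\Psi(\rho)-b.$$
Since $\Psi$ and $\mathrm{M}_u$ are increasing (the latter strictly, as $u$ is nonconstant), $a>0$; and directly $v=u-\mathrm{M}_u(r_1)\le 0$ on $\rho^{-1}(r_1)$ while $v=u-\mathrm{M}_u(r_3)\le 0$ on $\rho^{-1}(r_3)$. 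On $A_\rho(r_1,r_3)$ we have $\rho\ge r_1>5n$, so Lemma~\ref{Lemma 4} gives $\Delta\Psi(\rho)\le 0$ there, whence $\Delta v=\Delta u-a\,\Delta\Psi(\rho)\ge 0$: the function $v$ is $C^2$ and subharmonic on $A_\rho(r_1,r_3)$ with $v\le 0$ on its topological boundary, which is contained in $\rho^{-1}(r_1)\cup\rho^{-1}(r_3)$. The maximum principle for subharmonic functions (cf. Proposition~\ref{strong maximum principle}) then forces $v\le 0$ throughout $A_\rho(r_1,r_3)$, i.e. $u\le a\,\Psi(\rho)+b$ on that annulus.

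It remains to read off the inequality. Let $q^\ast\in D_{r_2}$ realize $\mathrm{M}_u(r_2)$. If $\rho(q^\ast)\le r_1$, then $q^\ast\in D_{r_1}$ and $\mathrm{M}_u(r_2)=u(q^\ast)\le\mathrm{M}_u(r_1)<\mathrm{M}_u(r_2)$, which is impossible; hence $r_1<\rho(q^\ast)\le r_2$ and $q^\ast\in A_\rho(r_1,r_3)$. Using $a>0$ and the monotonicity of $\Psi$,
$$\mathrm{M}_u(r_2)=u(q^\ast)\le a\,\Psi(\rho(q^\ast))+b\le a\,\Psi(r_2)+b,$$
and substituting the values of $a$ and $b$ and simplifying gives
$$a\,\Psi(r_2)+b=\frac{\mathrm{M}_u(r_1)\big(\Psi(r_3)-\Psi(r_2)\big)+\mathrm{M}_u(r_3)\big(\Psi(r_2)-\Psi(r_1)\big)}{\Psi(r_3)-\Psi(r_1)},$$
which, since $\Psi=\log\log$, is exactly the right-hand side of \eqref{Soliton}.

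The real obstacle has already been surmounted in Lemma~\ref{Lemma 4}: the delicate point is the sign of $\Delta\Psi(\rho)$, and establishing it is precisely where $S\ge\frac{n-2}{2}$ is used, via the case split $\frac{n-2}{2}\le S\le\frac{n}{2}$ versus $S>\frac{n}{2}$ together with the universal bound $S<\rho^2/4$. In the theorem itself the only things that require a moment's care are (i) that the maximizer of $u$ over $D_{r_2}$ cannot fall in the inner hole $\{\rho<r_1\}$ — which is exactly where strict monotonicity of $\mathrm{M}_u$ is invoked — and (ii) the routine verification that $A_\rho(r_1,r_3)$ is compact with $\rho$ smooth on it, so that the maximum principle applies without qualification.
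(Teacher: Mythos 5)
Your proposal is correct and follows essentially the same route as the paper: the same comparison function $u-a\Psi(\rho)-b$ with the same $a,b$, the sign of $\Delta\Psi(\rho)$ supplied by Lemma~\ref{Lemma 4}, and the maximum principle on the annulus $A_\rho(r_1,r_3)$. Your additional care in reading off the final inequality (locating the maximizer of $u$ over $D_{r_2}$ and handling the constant case) only fills in details the paper leaves implicit.
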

\begin{proof}As above, for any $5n<r_1<r_3$, we denote $$A_\rho(r_1,r_3)=\{q\in M|r_1\leq\rho (q)\leq r_3\}.$$Define a function $w$ on $A_\rho(r_1,r_3)$ by $$w=u-a\Psi(\rho)-b,$$ where $$a=\frac{\mathrm{M}_u(r_1)-\mathrm{M}_u(r_3)}{\Psi(r_1)-\Psi(r_3)}>0,$$ and $$b=\mathrm{M}_u(r_1)-\Psi(r_1)\frac{\mathrm{M}_u(r_1)-\mathrm{M}_u(r_3)}{\Psi(r_1)-\Psi(r_3)}.$$ It follows from Lemma \ref{Lemma 4} that,$$\Delta w=\Delta u-a\Delta\Psi(\rho)\geq 0.$$ Moreover, we have $$w|_{\rho^{-1}(r_1)}\leq 0$$ and $$w|_{\rho^{-1}(r_3)}\leq 0$$Applying Proposition \ref{strong maximum principle} to $w$, we get the desired result.\end{proof}

\begin{remark}Indeed, in the above theorem, we just need $S\geq \frac{n-2}{2}$ holds outside a compact set of $M$. \end{remark}

\begin{corollary}\label{Liouville 1}Keep the assumptions above. Then any bounded subharmonic function on $M$ must be a constant. \end{corollary}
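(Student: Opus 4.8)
The plan is to deduce this directly from Theorem \ref{three-circles3}, in exactly the same spirit as the Euclidean-type argument given in the corollary following Theorem \ref{three-circles1}. Let $u$ be a bounded subharmonic function on $M$, and suppose for contradiction that $u$ is not constant. First I would record two preliminary facts. Since the potential $f$ is proper, $\rho=2\sqrt{f}$ is proper, so the sublevel sets $D_r=\{\rho\le r\}$ exhaust $M$; consequently $\sup_M u=\lim_{r\to\infty}\mathrm{M}_u(r)$, and this supremum is finite because $u$ is bounded. Moreover $\mathrm{M}_u$ is non-decreasing in $r$, since $D_r\subseteq D_{r'}$ for $r\le r'$.

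Next, fix any $5n<r_1<r_2$ and let $r_3\to\infty$ in the inequality \eqref{Soliton}. Since $\Psi(r)=\log\log r\to\infty$ as $r\to\infty$, we have
\[\lim_{r_3\to\infty}\frac{\log\log r_3-\log\log r_2}{\log\log r_3-\log\log r_1}=1,\qquad \lim_{r_3\to\infty}\mathrm{M}_u(r_3)\,\frac{\log\log r_2-\log\log r_1}{\log\log r_3-\log\log r_1}=0,\]
where the second limit uses that $\mathrm{M}_u(r_3)$ remains bounded as $r_3\to\infty$. Hence $\mathrm{M}_u(r_2)\le\mathrm{M}_u(r_1)$. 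Combined with monotonicity this forces $\mathrm{M}_u(r_2)=\mathrm{M}_u(r_1)$, so the maximum of $u$ over $D_{r_2}$ is already attained on $D_{r_1}$, i.e.\ at an interior point of $D_{r_2}$. Equivalently, $\sup_M u=\mathrm{M}_u(r_1)$ is attained at a point of $M$. By the strong maximum principle (Proposition \ref{strong maximum principle}), $u$ is constant, contradicting our assumption.

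I do not expect any real obstacle here beyond bookkeeping; the statement is an immediate corollary of the three circles theorem. The only points meriting care are that properness of $f$ makes $\{\rho\le r\}$ an exhaustion of $M$ — this is what converts ``$u$ bounded on $M$'' into both the vanishing of the second limit above and the fact that the supremum of $u$ is attained — and that $r_1$ may be taken arbitrarily large, which is what lets us conclude $\sup_M u=\mathrm{M}_u(r_1)$. By the remark following Theorem \ref{three-circles3}, the hypothesis $S\ge\frac{n-2}{2}$ may be assumed to hold only outside a compact subset of $M$, and the argument above is unaffected.
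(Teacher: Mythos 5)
Your proposal is correct and follows essentially the same route as the paper: fix $5n<r_1<r_2$, let $r_3\to\infty$ in \eqref{Soliton} using $\log\log r_3\to\infty$ and the boundedness of $u$ to get $\mathrm{M}_u(r_2)\le\mathrm{M}_u(r_1)$, and then invoke the strong maximum principle. The extra bookkeeping you supply (properness of $f$ making $\{\rho\le r\}$ an exhaustion, monotonicity of $\mathrm{M}_u$) is a welcome elaboration of steps the paper leaves implicit, but it is not a different argument.
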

\begin{proof}Assume that $u$ is a bounded subharmonic function on $M$. Fixing $5n<r_1<r_2$ and letting $r_3\rightarrow \infty$ in (\ref{Soliton}), we have immediately that $$\mathrm{M}_u(r_2)\leq \mathrm{M}_u(r_1).$$It means that $u$ attains its maximum at some interior point of $\overline{B}_p(r_2)$. So $u$ is a constant.\end{proof}

\begin{remark}In a recent preprint \cite{MO}, Mai and Ou show that any bounded harmonic function is constant on gradient shrinking Ricci solitons with constant scalar curvature.\end{remark}

\section{K\"{a}hler-Ricci solitons}

In this section, we always assume that $(M,\omega,f)$ be an $n$-dimensional gradient shrinking K\"{a}hler-Ricci soliton with potential function $f$. Then with respect to a unitary frame, we have
\begin{equation}\label{Kahler-Ricci 1}R_{i\overline{j}}+f_{i\overline{j}}=\frac{1}{2}\delta_{i\overline{j}}\end{equation} and \begin{equation}\label{Kahler-Ricci 2}f_{ij}=0.\end{equation}

As before, we set $\rho=2\sqrt{f}$ and $D_r=\{q\in M|\rho(q)\leq r\}$ for $r>0$. Given a holomorphic function $h$ on $M$, we set $$\mathrm{M}_h(r)=\max_{D_r}|h|.$$ By the maximum modulus theorem for holomorphic functions, $\mathrm{M}_h(r)$ is strictly increasing in $r$ unless $h$ is constant.

For any smooth section $X$ of the real tangent bundle $TM$ of $M$, we view it as a smooth section of the complex bundle $TM^\mathbb{C}=TM\otimes \mathbb{C}$ by the canonical embedding $TM\hookrightarrow TM^\mathbb{C}$. We denote by $X^{1,0}=X-\sqrt{-1}JX$ the projection of $X$ to $T^{1,0}M$. Here $J$ is the complex structure on $M$ and we omit the factor $\frac{1}{2}$ for simplicity.

\begin{lemma}\label{lemma 5}For any $\varphi\in C^2(M,\mathbb{R})$ and any smooth real vector fields $X$ and $Y$ on $M$, we have $$\mathrm{Hess}(\varphi)(X^{1,0},\overline{Y^{1,0}})=\partial\overline{\partial}\varphi(X^{1,0},\overline{Y^{1,0}}).$$ \end{lemma}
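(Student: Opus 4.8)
The plan is to compute both sides in a local holomorphic coordinate system (equivalently, with respect to a local unitary frame) and check they agree. Recall that for a real $C^2$ function $\varphi$, the complex Hessian $\mathrm{Hess}(\varphi)$ is the complexification of the real Hessian $\nabla d\varphi$, extended $\mathbb{C}$-bilinearly to $TM^{\mathbb{C}}$. On a K\"{a}hler manifold the Levi-Civita connection coincides with the Chern connection, so covariant differentiation preserves the type decomposition $TM^{\mathbb{C}}=T^{1,0}M\oplus T^{0,1}M$; this is the structural fact that makes the identity work.

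First I would fix local holomorphic coordinates $(z^1,\dots,z^n)$ and write $\partial_i=\partial/\partial z^i$, $\partial_{\bar j}=\partial/\partial \bar z^j$. On the $(1,0)$ and $(0,1)$ parts the only surviving Christoffel symbols are $\Gamma^k_{ij}$ and its conjugate $\Gamma^{\bar k}_{\bar i\bar j}$; all mixed symbols such as $\Gamma^k_{i\bar j}$ vanish by K\"{a}hlerity. Hence for the full real Hessian one gets
\begin{equation*}
\mathrm{Hess}(\varphi)(\partial_i,\partial_{\bar j})=\partial_i\partial_{\bar j}\varphi-\Gamma^k_{i\bar j}\partial_k\varphi-\Gamma^{\bar k}_{i\bar j}\partial_{\bar k}\varphi=\partial_i\partial_{\bar j}\varphi,
\end{equation*}
since the mixed Christoffel symbols are zero. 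On the other hand $\partial\overline{\partial}\varphi$ is by definition the $(1,1)$-form with components $\partial_i\partial_{\bar j}\varphi$ (up to the usual normalization, which we must keep consistent with the paper's conventions), so $\partial\overline{\partial}\varphi(\partial_i,\partial_{\bar j})=\partial_i\partial_{\bar j}\varphi$ as well. This establishes the identity on the coordinate frame $\{\partial_i\}$ and $\{\partial_{\bar j}\}$.

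To finish, I would extend from the coordinate frame to arbitrary real vector fields $X,Y$ by $\mathbb{C}$-linearity. Writing $X^{1,0}=\sum X^i\partial_i$ and $\overline{Y^{1,0}}=\sum \overline{Y^j}\,\partial_{\bar j}$ for smooth complex-valued functions $X^i$, $Y^j$, both sides are tensorial in their arguments (no derivatives fall on the coefficients), so the pointwise equality of the components propagates to
\begin{equation*}
\mathrm{Hess}(\varphi)(X^{1,0},\overline{Y^{1,0}})=\sum_{i,j}X^i\overline{Y^j}\,\partial_i\partial_{\bar j}\varphi=\partial\overline{\partial}\varphi(X^{1,0},\overline{Y^{1,0}}),
\end{equation*}
which is the claim. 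The only genuine point requiring care — the ``main obstacle'', such as it is — is bookkeeping of conventions: making sure the normalization constant in the definition of $\partial\overline{\partial}\varphi$ used elsewhere in the paper matches the one obtained here, and confirming that the real Hessian is indeed being extended $\mathbb{C}$-bilinearly (not Hermitian-sesquilinearly) so that the vanishing of the mixed Christoffel symbols is exactly what kills the first-derivative terms. Everything else is the standard computation that on a K\"{a}hler manifold the $(1,1)$-part of the Hessian of a real function is its complex Hessian.
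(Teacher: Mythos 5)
Your proposal is correct and follows essentially the same route as the paper: compute in local holomorphic coordinates, use that K\"ahlerity forces the mixed Christoffel symbols to vanish (equivalently $\nabla_{\partial/\partial z^i}\,\partial/\partial\bar z^j=0$), and conclude that the $(1,0)$--$(0,1)$ component of the Hessian reduces to $\partial_i\partial_{\bar j}\varphi$. The only cosmetic difference is that the paper verifies tensoriality by explicitly expanding $X^{1,0}\overline{Y^{1,0}}\varphi-(\nabla_{X^{1,0}}\overline{Y^{1,0}})\varphi$ and watching the derivative-of-coefficient terms cancel, whereas you invoke it abstractly.
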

\begin{proof}At each point $q\in M$, we choose a holomorphic coordinate system $\{z^i\}$ at $p$. Locally, we write $X^{1,0}=X^i\frac{\partial}{\partial z^i}$ and $Y^{1,0}=Y^i\frac{\partial}{\partial z^i}$. Since $M$ is a K\"{a}hler manifold, we have $\nabla_{\frac{\partial}{\partial z^i}}\frac{\partial}{\partial \overline{z}^j}=0$. Thus\begin{align*}
\mathrm{Hess}(\varphi)(X^{1,0},\overline{Y^{1,0}})&=X^{1,0}\overline{Y^{1,0}}\varphi-\big(\nabla_{X^{1,0}}\overline{Y^{1,0}}\big)\varphi\\[3pt]
&=X^i\frac{\partial}{\partial z^i}\Big(\overline{Y^j}\frac{\partial \varphi}{\partial \overline{z}^j}\Big)-X^i\Big(\frac{\partial \overline{Y^j}}{\partial z^i} \frac{\partial \varphi}{\partial \overline{z}^j}+\overline{Y^j}\Big(\nabla_{\frac{\partial}{\partial z^i}}\frac{\partial}{\partial \overline{z}^j}\Big)\varphi\Big)\\[3pt]
&=X^i\overline{Y^j}\frac{\partial^2 \varphi}{ \partial z^i\partial \overline{z}^j}\\[3pt]
&=\frac{\partial^2 \varphi}{ \partial z^i\partial \overline{z}^j}dz^i\wedge d\overline{z}^j(X^{1,0},\overline{Y^{1,0}})\\[3pt]
&=\partial\overline{\partial}\varphi(X^{1,0},\overline{Y^{1,0}}).
\end{align*}\end{proof}

\begin{lemma}\label{lemma 6}Set $e_\rho=\frac{\nabla \rho}{|\nabla \rho|}$. If $\mathrm{Ric}(e_\rho^{1,0}, \overline{e_\rho^{1,0}})\geq \frac{S}{f}$, then $$\mathrm{Hess}(\log\rho)(\nabla \rho^{1,0},\overline{\nabla \rho^{1,0}})\leq 0.$$\end{lemma}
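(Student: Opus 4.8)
The plan is to compute $\mathrm{Hess}(\log\rho)$ directly in terms of $\mathrm{Hess}(f)$ and $\mathrm{Hess}(\rho)$, exploit the soliton equation to replace $\mathrm{Hess}(f)$ by $\tfrac12 g - \mathrm{Ric}$, and then feed in the hypothesis $\mathrm{Ric}(e_\rho^{1,0},\overline{e_\rho^{1,0}})\geq S/f$. First I would recall $\rho = 2\sqrt f$, so $\log\rho = \tfrac12\log f + \log 2$, and hence it suffices to understand $\mathrm{Hess}(\log f)$ evaluated on $(\nabla\rho^{1,0}, \overline{\nabla\rho^{1,0}})$. Using the chain rule for the Hessian of a composition, $\mathrm{Hess}(\log f) = \tfrac1f \mathrm{Hess}(f) - \tfrac{1}{f^2}\, df\otimes df$. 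Evaluating the $(1,0)$--$(0,1)$ component via Lemma 5.5 (so that the Hessian agrees with the $\partial\overline\partial$--form, and the $df\otimes df$ term contributes $\tfrac{1}{f^2}|\partial f|^2$ type quantities; note $f_{ij}=0$ from \eqref{Kahler-Ricci 2} means there is no $\partial f\otimes\partial f$ contribution in the $(1,1)$ slot, only $|\partial_i f|^2$), I would get an explicit expression for $\mathrm{Hess}(\log\rho)(\nabla\rho^{1,0},\overline{\nabla\rho^{1,0}})$ in terms of $f$, $|\nabla f|^2$, and $\mathrm{Ric}(\nabla\rho^{1,0},\overline{\nabla\rho^{1,0}})$.

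Next I would substitute the soliton identities from Lemma 3.2 and Lemma 3.3: $S+|\nabla f|^2 = f$, so $|\nabla f|^2 = f - S$, and $\nabla\rho = \nabla f/\sqrt f$, hence $|\nabla\rho|^2 = |\nabla f|^2/f = (f-S)/f$ and $\nabla\rho^{1,0} = \nabla f^{1,0}/\sqrt f$. Because $e_\rho = \nabla\rho/|\nabla\rho|$, one has $\nabla\rho^{1,0} = |\nabla\rho|\, e_\rho^{1,0}$, so $\mathrm{Ric}(\nabla\rho^{1,0},\overline{\nabla\rho^{1,0}}) = |\nabla\rho|^2\,\mathrm{Ric}(e_\rho^{1,0},\overline{e_\rho^{1,0}})$ and by hypothesis this is $\geq |\nabla\rho|^2 \cdot \tfrac{S}{f} = \tfrac{(f-S)S}{f^2}$. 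Using $R_{i\overline j} = \tfrac12\delta_{i\overline j} - f_{i\overline j}$ from \eqref{Kahler-Ricci 1}, I can also express $\mathrm{Hess}(f)(\nabla\rho^{1,0},\overline{\nabla\rho^{1,0}})$, equivalently $f_{i\overline j}\, \nabla\rho^i\,\overline{\nabla\rho^j}$, as $\tfrac12|\nabla\rho^{1,0}|^2 - \mathrm{Ric}(\nabla\rho^{1,0},\overline{\nabla\rho^{1,0}})$; combining the pieces, the Ricci lower bound turns into an upper bound on the Hessian term, and after clearing denominators the claimed inequality $\mathrm{Hess}(\log\rho)(\nabla\rho^{1,0},\overline{\nabla\rho^{1,0}})\leq 0$ should reduce to something like $-\,S\,|\nabla\rho|^2/(2f) \le 0$ or a manifestly nonpositive combination of $S\geq 0$ (Lemma 3.3) and $f>0$.

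The main obstacle I anticipate is bookkeeping rather than conceptual: getting the normalization factors in $X^{1,0} = X - \sqrt{-1}JX$ (the paper's convention drops the usual $\tfrac12$) to match the unitary-frame identities \eqref{Kahler-Ricci 1}--\eqref{Kahler-Ricci 2}, and correctly handling the $df\otimes df$ term in $\mathrm{Hess}(\log f)$ — specifically, which of $|\partial f|^2$ versus $|\nabla f|^2$ appears in the $(1,1)$-component, and the factor relating them. I would pin this down by evaluating everything at a fixed point in a unitary frame adapted so that $\nabla\rho^{1,0}$ is aligned with a coordinate direction, so that the contraction $f_{i\overline j}\nabla\rho^i\overline{\nabla\rho^j}$ becomes a single diagonal entry, and similarly for the Ricci and metric terms; then Lemma 5.5 guarantees the Hessian computation and the $\partial\overline\partial$ computation agree, and the soliton equation is just a scalar identity among those diagonal entries. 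Once the constants are fixed, the inequality is immediate from $S\geq 0$.
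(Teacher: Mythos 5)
Your plan is correct and is essentially the paper's own argument: a chain-rule computation of the Hessian combined with the soliton equation and the identity $S+|\nabla f|^2=f$, leading (whether routed through $\log f$ as you propose or through $\rho$ as the paper does, and whether the $(1,1)$-component is extracted via Lemma \ref{lemma 5} or via the real decomposition $\mathrm{Hess}(X,X)+\mathrm{Hess}(JX,JX)$) to the same final expression $\mathrm{Hess}(\log\rho)(\nabla \rho^{1,0},\overline{\nabla \rho^{1,0}})=\frac{|\nabla f|^2}{2f^3}\big(S-f\,\mathrm{Ric}(e_\rho^{1,0}, \overline{e_\rho^{1,0}})\big)$. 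One small correction to your anticipated endgame: the conclusion follows at once from the hypothesis $\mathrm{Ric}(e_\rho^{1,0}, \overline{e_\rho^{1,0}})\geq \frac{S}{f}$, which makes the parenthetical factor nonpositive, and $S\geq 0$ plays no role here.
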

\begin{proof}Since $\rho=2\sqrt{f}$, it is easy to compute that for any smooth real vector fields $X$ and $Y$ on $M$, $$\mathrm{Hess}(\log\rho)(X,Y)=\frac{1}{\rho}\mathrm{Hess}(\rho)(X,Y)-\frac{(X\rho)(Y\rho)}{\rho^2},$$ and $$\mathrm{Hess}(\rho)(X,Y)=\frac{1}{\sqrt{f}}\mathrm{Hess}(f)(X,Y)-\frac{(Xf)(Yf)}{2\sqrt{f^3}}.$$

For a smooth real-valued function $\varphi$ on $M$, we have $$\overline{\mathrm{Hess}(\varphi)(\nabla \rho^{1,0},\overline{\nabla \rho^{1,0}})}=\mathrm{Hess}(\varphi)(\overline{\nabla \rho^{1,0}},\nabla \rho^{1,0})=\mathrm{Hess}(\varphi)(\nabla \rho^{1,0},\overline{\nabla \rho^{1,0}}),$$ which asserts that $\mathrm{Hess}(\varphi)(\nabla \rho^{1,0},\overline{\nabla \rho^{1,0}})$ is real. So, by the $\mathbb{C}$-bilinearity of hessians, we have\begin{align*}
\mathrm{Hess}(\log\rho)(\nabla \rho^{1,0},\overline{\nabla \rho^{1,0}})&=\mathrm{Hess}(\log\rho)(\nabla \rho,\nabla \rho)+\mathrm{Hess}(\log\rho)(J\nabla \rho,J\nabla \rho)\\[3pt]
&=\frac{1}{\rho}\mathrm{Hess}(\rho)(\nabla \rho,\nabla \rho)-\frac{\big((\nabla \rho )\rho\big)^2}{\rho^2}\\[3pt]
&\ \ \ +\frac{1}{\rho}\mathrm{Hess}(\rho)(J\nabla \rho,J\nabla \rho)-\frac{\big((J\nabla \rho )\rho\big)^2}{\rho^2}\\[3pt]
&=\frac{1}{2f}\mathrm{Hess}(f)(\nabla \rho,\nabla \rho)-\frac{\big((\nabla \rho) f\big)^2}{4f^2}-\frac{|\nabla \rho |^4}{\rho^2}\\[3pt]
&\ \ \ +\frac{1}{2f}\mathrm{Hess}(f)(J\nabla \rho,J\nabla \rho)-\frac{\big((J\nabla \rho) f\big)^2}{4f^2}-\frac{\langle J\nabla \rho,\nabla\rho\rangle^2}{\rho^2}\\[3pt]
&=\frac{1}{2f}\mathrm{Hess}(f)(\nabla \rho,\nabla \rho)-\frac{\langle \nabla \rho,\nabla f\rangle^2}{4f^2}-\frac{|\nabla \rho |^4}{\rho^2}\\[3pt]
&\ \ \ +\frac{1}{2f}\mathrm{Hess}(f)(J\nabla \rho,J\nabla \rho)\\[3pt]
&=\frac{1}{2f}\mathrm{Hess}(f)(\nabla \rho,\nabla \rho)-\frac{|\nabla f |^4}{2f^3}+\frac{1}{2f}\mathrm{Hess}(f)(J\nabla \rho,J\nabla \rho)\\[3pt]
&=\frac{1}{2f}\Big(\frac{|\nabla \rho|^2}{2}-\mathrm{Ric}(\nabla \rho)+\frac{|J\nabla \rho|^2}{2}-\mathrm{Ric}(J\nabla \rho)\Big)-\frac{|\nabla f |^4}{2f^3}\\[3pt]
&=\frac{|\nabla \rho|^2}{2f}-\frac{|\nabla f |^4}{2f^3}-\frac{1}{2f}\big(\mathrm{Ric}(\nabla \rho)+\mathrm{Ric}(J\nabla \rho)\big)\\[3pt]
&=\frac{|\nabla f|^2(f-|\nabla f |^2)}{2f^3}-\frac{1}{2f}\big(|\nabla \rho|^2\mathrm{Ric}(e_\rho)+|J\nabla \rho|^2\mathrm{Ric}(Je_\rho)\big)\\[3pt]
&= \frac{|\nabla f|^2S}{2f^3}-\frac{|\nabla \rho|^2}{2f^2}(\mathrm{Ric}(e_\rho)+\mathrm{Ric}(Je_\rho))\\[3pt]
&= \frac{|\nabla f|^2}{2f^3}\big(S-f(\mathrm{Ric}(e_\rho)+\mathrm{Ric}(Je_\rho))\big).
\end{align*}In the above calculation, we used the fact $\langle JX,JX\rangle=\langle X,X\rangle$ and $\langle JX,X\rangle=0$ for any real smooth vector field $X$ on $M$. Thus, if $\mathrm{Ric}(e_\rho^{1,0}, \overline{e_\rho^{1,0}})=\mathrm{Ric}(e_\rho)+\mathrm{Ric}(Je_\rho)\geq \frac{S}{f}$, we have $\mathrm{Hess}(\log\rho)(\nabla \rho^{1,0},\overline{\nabla \rho^{1,0}})\leq 0$.\end{proof}

\begin{remark}(i) The curvature pinched condition $\mathrm{Ric}(e_\rho^{1,0}, \overline{e_\rho^{1,0}})\geq \frac{S}{f}$ seems to be quite strong. However, it can hold in some certain kinds of shrinking solitons. For example, the second author in \cite{Z} can show a stronger result.

(ii) Consider the gradient shrinking K\"{a}hler-Ricci soliton $(\mathbb{C}^n,\overline{g},f)$, where $\overline{g}$ is the canonical Eucldean metric and $f(z)=\frac{|z|^2}{4}$ for $z\in \mathbb{C}^n$. Since $(\mathbb{C}^n,\overline{g})$ is flat, it satisfies the condition in the above lemma.\end{remark}

\begin{theorem}Suppose that $\nabla f\neq 0$ and $\mathrm{Ric}(e_\rho^{1,0}, \overline{e_\rho^{1,0}})\geq \frac{S}{f}$ on $M\setminus D_{r_0}$ for some $r_0>0$. Then, for any holomorphic function $h$ on $M$ and any $r_0\leq r_1<r_2<r_3$, we have \begin{equation}\label{holomorphic}\log\mathrm{M}_h(r_2)\leq\frac{\log\mathrm{M}_h(r_1)(\log r_3-\log r_2)+\log\mathrm{M}_h(r_3)(\log r_2-\log r_1)}{\log r_3-\log r_1}.\end{equation}\end{theorem}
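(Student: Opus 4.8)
The plan is to reduce the statement to an application of the three circles theorem for subharmonic functions on gradient shrinking Ricci solitons, namely Theorem~\ref{three-circles2}, by showing that $\log|h|$ is (in an appropriate sense) subharmonic outside $D_{r_0}$. The key point is that $\log|h|$ is plurisubharmonic for any holomorphic $h$, so for any choice of complex direction one controls $\partial\overline{\partial}\log|h|$ evaluated on that direction; what the curvature hypothesis buys us is that the ``bad'' term coming from the comparison function is nonpositive in precisely the direction $\nabla\rho^{1,0}$, which is the direction that shows up when one differentiates a radial function.

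\medskip
First I would set up the barrier argument exactly as in the proof of Theorem~\ref{three-circles2}. Fix $r_0\leq r_1<r_3$ and set $v=\log|h|-a\log\rho-b$ on the annulus $A_\rho(r_1,r_3)=\{q\in M: r_1\leq\rho(q)\leq r_3\}$, where $a=\frac{\log\mathrm{M}_h(r_1)-\log\mathrm{M}_h(r_3)}{\log r_1-\log r_3}>0$ and $b$ is chosen so that $v|_{\rho^{-1}(r_1)}\leq 0$ and $v|_{\rho^{-1}(r_3)}\leq 0$; monotonicity of $\mathrm{M}_h$ (from the maximum modulus theorem) gives $a>0$. The goal is to conclude $v\leq 0$ on $A_\rho(r_1,r_3)$ by a maximum-principle argument; evaluating at $q\in\rho^{-1}(r_2)$ then yields \eqref{holomorphic} after rearranging. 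The subtlety compared with the real case is that $\log|h|$ is only subharmonic in the weak sense (it has $-\infty$ singularities at the zeros of $h$), so one must either work away from $Z(h)=\{h=0\}$ and note that near $Z(h)$ the function $v$ tends to $-\infty$ and hence cannot attain an interior maximum there, or phrase everything in terms of the distributional/viscosity maximum principle for subharmonic functions. Either way the zeros of $h$ cause no trouble.

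\medskip
The analytic heart is to show $\Delta v\geq 0$ (weakly) on $A_\rho(r_1,r_3)\setminus Z(h)$, i.e.\ $a\,\Delta\log\rho\leq \Delta\log|h|$. For this I would use that on a K\"ahler manifold $\Delta = 2\,\mathrm{tr}_\omega(\partial\overline{\partial}\,\cdot\,)$, and that $\partial\overline{\partial}\log|h|\geq 0$ as a $(1,1)$-form since $h$ is holomorphic, so $\Delta\log|h|\geq 0$. It therefore suffices to show $\Delta\log\rho\leq 0$ on $M\setminus D_{r_0}$. Now $\log\rho$ is radial, and one expects its complex Hessian to be concentrated essentially in the $\nabla\rho^{1,0}$ direction; Lemma~\ref{lemma 6} gives precisely $\mathrm{Hess}(\log\rho)(\nabla\rho^{1,0},\overline{\nabla\rho^{1,0}})\leq 0$ under the hypothesis $\mathrm{Ric}(e_\rho^{1,0},\overline{e_\rho^{1,0}})\geq S/f$, and Lemma~\ref{lemma 5} identifies this with $\partial\overline{\partial}\log\rho$ on that direction. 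So the plan is: compute $\Delta\log\rho$ by tracing $\partial\overline{\partial}\log\rho$ over a unitary frame whose first vector is $e_\rho^{1,0}/|\nabla\rho|$ (suitably normalized), show the off-diagonal and other diagonal contributions vanish or combine favorably using $f_{ij}=0$ from \eqref{Kahler-Ricci 2} together with $\rho=2\sqrt f$ so that $\partial\rho\wedge\overline{\partial}\rho$ captures the only nonradial piece, and deduce $\Delta\log\rho\le 0$. The hypothesis $\nabla f\neq 0$ on $M\setminus D_{r_0}$ is needed so that $e_\rho$ is well-defined and $\rho$ is smooth with nonvanishing gradient there.

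\medskip
The main obstacle I anticipate is the linear-algebra bookkeeping in the previous paragraph: showing that $\Delta\log\rho$ (a full trace over all $n$ complex directions) is controlled by the single mixed second derivative in the $\nabla\rho^{1,0}$ direction supplied by Lemma~\ref{lemma 6}. Concretely one must verify that for a radial function $\phi(\rho)$ one has $\mathrm{tr}_\omega\,\partial\overline{\partial}\phi(\rho) = \phi''(\rho)\,|\partial\rho|^2 \cdot(\text{const}) + \phi'(\rho)\,\Delta\rho$, and then that $\Delta\rho$ itself reduces — via $\mathrm{Ric}+\mathrm{Hess}(f)=\tfrac12 g$, $S+|\nabla f|^2=f$, and $f_{ij}=0$ — to an expression whose nonpositivity is equivalent to the stated curvature pinching; the computation in Lemma~\ref{lemma 6} already essentially does this for $\phi=\log$, so really the task is to package that lemma plus Lemma~\ref{lemma 5} into the statement $\Delta\log\rho\leq 0$ and then run the barrier argument verbatim as in Theorem~\ref{three-circles2}, being a little careful about the zero set of $h$.
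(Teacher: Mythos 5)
There is a genuine gap: your reduction to Theorem~\ref{three-circles2} hinges on the claim that $\Delta\log\rho\le 0$ outside $D_{r_0}$, and that claim is false. On a K\"ahler manifold of complex dimension $n$ the real dimension is $2n$, so $\log\rho$ is not the fundamental-solution-type comparison function; its Laplacian picks up a positive term of order $(2n-2)/\rho^2$ from the trace over the non-radial directions. Concretely, on the Gaussian soliton $(\mathbb{C}^n,\overline g, |z|^2/4)$ --- which the paper notes satisfies the curvature hypothesis --- one has $\rho=|z|$ and $\partial\overline\partial\log|z|^2$ is positive semidefinite with strictly positive trace for $n\ge 2$, so $\Delta\log\rho>0$ there. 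The hypothesis $\mathrm{Ric}(e_\rho^{1,0},\overline{e_\rho^{1,0}})\ge S/f$ only controls the complex Hessian of $\log\rho$ in the single direction $\nabla\rho^{1,0}$ (this is exactly what Lemma~\ref{lemma 6} gives); it says nothing about the other $n-1$ diagonal entries $\partial\overline\partial\log\rho(e_i,\overline{e_i})$, which involve $\mathrm{Hess}(f)(e_i,\overline{e_i})=\tfrac12-R_{i\overline i}$ and do not ``vanish or combine favorably.'' So the linear-algebra step you flag as the anticipated obstacle is not bookkeeping --- it is where the argument breaks.

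The fix is to abandon the Laplacian entirely and exploit that $\log|h|$ is \emph{pluriharmonic} off the zero set of $h$, not merely subharmonic: by Lemma~\ref{lemma 5} and the Poincar\'e--Lelong equation, $\mathrm{Hess}(\log|h|)(\nabla\rho^{1,0},\overline{\nabla\rho^{1,0}})=0$ wherever $h\ne 0$. At an interior maximum $\tilde q$ of $\log|h|-a\log(\rho-\varepsilon)-b$ (necessarily off $Z(h)$, as you correctly note), the full real Hessian is nonpositive, hence so is its evaluation on the single pair $(\nabla\rho^{1,0},\overline{\nabla\rho^{1,0}})$; this forces $\mathrm{Hess}(\log(\rho-\varepsilon))(\nabla\rho^{1,0},\overline{\nabla\rho^{1,0}})\ge 0$ at $\tilde q$, which contradicts the strict negativity supplied by the Lemma~\ref{lemma 6}--type computation under the curvature pinching and $\nabla f\ne 0$. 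This is the route the paper takes, and it is why the conclusion \eqref{holomorphic} is logarithmic convexity in $\log r$ in every complex dimension, exactly as in the classical Hadamard theorem, rather than convexity in $\Phi_{2n}(r)$ as your reduction would produce. Your first paragraph gestures at this (``nonpositive in precisely the direction $\nabla\rho^{1,0}$''), but the plan you then commit to does not use it.
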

\begin{proof}Set $$G=(\log r_3-\log r_1)\log|h|$$ and $$F=\log\frac{r_3}{\rho} \log\mathrm{M}_h(r_1)+\log\frac{\rho }{r_1}\log\mathrm{M}_h(r_3).$$We claim that $G-F\leq 0$ on $A_\rho(r_1,r_3)=\{q\in M|r_1\leq \rho (q)\leq r_3\}$, which implies our result.

For any $0<\varepsilon<r_1$, we define $$F_\varepsilon=a_\varepsilon \log(\rho-\varepsilon)+b_\varepsilon,$$where$$a_\varepsilon=\frac{\big(\log r_3 -\log r_1)(\mathrm{M}_h(r_1)-\mathrm{M}_h(r_3))}{\log(r_1-\varepsilon)-\log(r_3-\varepsilon)}>0,$$ and $$b_\varepsilon=\big(\log r_3 -\log r_1 \big)\frac{\mathrm{M}_h(r_1)\log(r_3-\varepsilon)-\mathrm{M}_h(r_3)\log(r_1-\varepsilon)}{\log(r_3-\varepsilon)-\log(r_1-\varepsilon)}.$$ Then $F_\varepsilon(q)=(\log r_3-\log r_1)\mathrm{M}_h(r_1)$ for $q\in \rho^{-1}(r_1)$ and $F_\varepsilon(q)=(\log r_3 -\log r_1)\mathrm{M}_h(r_3)$ for $q\in \rho^{-1}(r_3)$. Obviously, $F_\varepsilon\rightarrow F$ as $\varepsilon\rightarrow 0^+$.

We are going to show by contradiction that $G-F_\varepsilon\leq 0$ on $A_\rho(r_1,r_3)$. Assume that $(G-F_\varepsilon)(q)>0$ for some $q\in A_\rho(r_1,r_3)$. Since $G-F_\epsilon$ is continuous on $A_\rho(r_1,r_3)$, it attains its maximum at some $\tilde{q}_\varepsilon\in A_\rho(r_1,r_3)$. Moreover $(G-F_\varepsilon)(\tilde{q}_\varepsilon)>0$. Since $G-F_\varepsilon$ is non-positive on the boundary of $A_\rho(r_1,r_3)$. $\tilde{q}_\varepsilon$ is an interior point of $A_\rho(r_1,r_3)$.

From calculus, the Hessian $\mathrm{Hess}(G-F_\varepsilon)$ of $G-F_\varepsilon$ at $\tilde{q}_\varepsilon$ is non positive definite. In particular, we have $$\mathrm{Hess}(G-F_\varepsilon)_{\tilde{q}_\varepsilon}(\nabla \rho^{1,0}|_{\tilde{q}_\varepsilon},\overline{\nabla \rho^{1,0}}|_{\tilde{q}_\varepsilon})\leq 0.$$ Observe that $|h(\tilde{q}_\varepsilon)|\neq 0$, otherwise $G(\tilde{q}_\varepsilon)=-\infty$, a contradiction. It follows from Lemma \ref{lemma 5} and the Poincar\'{e}-Lelong equation $\frac{\sqrt{-1}}{2\pi}\partial\overline{\partial}\log |h|^2=[D]$, we have$$\mathrm{Hess}(G)_{\tilde{q}_\varepsilon}(\nabla \rho^{1,0}|_{\tilde{q}_\varepsilon},\overline{\nabla \rho^{1,0}}|_{\tilde{q}_\varepsilon})=\partial\overline{\partial} G_{\tilde{q}_\varepsilon}(\nabla \rho^{1,0}|_{\tilde{q}_\varepsilon},\overline{\nabla \rho^{1,0}}|_{\tilde{q}_\varepsilon})=0.$$Here $[D]$ is the divisor of $h$. Therefore $\mathrm{Hess}(F_\varepsilon)_{\tilde{q}_\varepsilon}(\nabla \rho^{1,0}|_{\tilde{q}_\varepsilon},\overline{\nabla \rho^{1,0}}|_{\tilde{q}_\varepsilon})\geq 0$.

On the other hand, by a similar calculation as in Lemma \ref{lemma 6}, we have\begin{align*}
\lefteqn{\mathrm{Hess}(F_\varepsilon)(\nabla \rho^{1,0},\overline{\nabla \rho^{1,0}})}\\[3pt]
&=a_\varepsilon\mathrm{Hess}\big(\log(\rho-\varepsilon)\big)(\nabla \rho^{1,0},\overline{\nabla \rho^{1,0}})\\[3pt]
&=\frac{a_\varepsilon |\nabla f|^2}{(\rho-\varepsilon)f^{5/2}}\Big(f-\frac{|\nabla f|^2(\rho-\varepsilon+2\sqrt{f})}{2(\rho-\varepsilon)}-f\mathrm{Ric}(e_\rho^{1,0}, \overline{e_\rho^{1,0}})\Big)\\[3pt]
&<\frac{a_\varepsilon |\nabla f|^2}{(\rho-\varepsilon)f^{5/2}}\big(f-|\nabla f|^2-f\mathrm{Ric}(e_\rho^{1,0}, \overline{e_\rho^{1,0}})\big),
\end{align*}where the last inequality follows from the assumption $\nabla f\neq 0$ and the fact $\frac{\sqrt{f}}{\rho-\varepsilon}>\frac{1}{2}$. By the condition $\mathrm{Ric}(e_\rho^{1,0}, \overline{e_\rho^{1,0}})\geq \frac{S}{f}$, we have $\mathrm{Hess}(F_\varepsilon)_{\tilde{q}_\varepsilon}(\nabla \rho^{1,0}|_{\tilde{q}_\varepsilon},\overline{\nabla \rho^{1,0}}|_{\tilde{q}_\varepsilon})< 0$, a contradiction.

By letting $\varepsilon\rightarrow 0^+$, we have $G-F\leq 0$ on $A(r_1,r_2)$, as desired.
\end{proof}

By a similar argument as before, we have immediately that

\begin{corollary}\label{Liouville 2}Keep the assumptions in the above theorem. Then any bounded holomorphic function on $M$ must be a constant. \end{corollary}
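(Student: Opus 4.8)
The plan is to imitate verbatim the limiting argument that deduces Corollary~\ref{Liouville 1} from Theorem~\ref{three-circles3}, now feeding in the three circles inequality~(\ref{holomorphic}) for holomorphic functions instead of the one for subharmonic functions. Let $h$ be a bounded holomorphic function on $M$, say $|h|\leq C$ on $M$, and suppose toward a contradiction that $h$ is non-constant. By the maximum modulus principle recorded at the beginning of this section, $r\mapsto\mathrm{M}_h(r)$ is then strictly increasing. Fix radii $r_0\leq r_1<r_2$; then for every $r_3\geq r_2$ we have $0<\mathrm{M}_h(r_2)\leq\mathrm{M}_h(r_3)\leq C$, so $\log\mathrm{M}_h(r_3)$ remains in the bounded interval $[\log\mathrm{M}_h(r_2),\log C]$.

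Next I would let $r_3\to\infty$ in~(\ref{holomorphic}). Writing the right-hand side of~(\ref{holomorphic}) as
$$\log\mathrm{M}_h(r_1)\cdot\frac{\log r_3-\log r_2}{\log r_3-\log r_1}+\log\mathrm{M}_h(r_3)\cdot\frac{\log r_2-\log r_1}{\log r_3-\log r_1},$$
and using
$$\lim_{r_3\to\infty}\frac{\log r_3-\log r_2}{\log r_3-\log r_1}=1,\qquad \lim_{r_3\to\infty}\frac{\log r_2-\log r_1}{\log r_3-\log r_1}=0,$$
together with the boundedness of $\log\mathrm{M}_h(r_3)$ just noted, this expression converges to $\log\mathrm{M}_h(r_1)$. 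Hence $\log\mathrm{M}_h(r_2)\leq\log\mathrm{M}_h(r_1)$, i.e. $\mathrm{M}_h(r_2)\leq\mathrm{M}_h(r_1)$ with $r_1<r_2$, contradicting the strict monotonicity of $\mathrm{M}_h$. Therefore $h$ must be constant. (Equivalently, one may argue directly: $\mathrm{M}_h(r_2)\leq\mathrm{M}_h(r_1)$ forces $|h|$ to attain its maximum over $D_{r_2}$ at an interior point, so $h$ is constant by the maximum modulus theorem.)

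I do not expect any genuine obstacle here: the corollary is simply the holomorphic counterpart of the Liouville statements already obtained for subharmonic functions, and the three circles theorem does all the work. The only points deserving a word of care are to ensure that $\log\mathrm{M}_h(r_3)$ does not run off to $-\infty$ (this is why we first assume $h$ non-constant, which gives $\mathrm{M}_h(r_3)\geq\mathrm{M}_h(r_2)>0$ for $r_3\geq r_2$, rather than merely $\mathrm{M}_h(r_3)\leq C$), and to keep all radii at least $r_0$ so that~(\ref{holomorphic}) is applicable — harmless, since $r_1$ may be chosen as large as we please.
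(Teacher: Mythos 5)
Your proposal is correct and follows exactly the route the paper intends: the paper proves this corollary with the one-line remark ``by a similar argument as before,'' meaning the same $r_3\to\infty$ limiting argument used for Corollary \ref{Liouville 1}, which is precisely what you carry out. Your extra care in noting that $\log\mathrm{M}_h(r_3)$ stays bounded below (via non-constancy and the identity theorem guaranteeing $\mathrm{M}_h(r_2)>0$) is a worthwhile detail the paper leaves implicit.
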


\bibliographystyle{amsplain}

\begin{thebibliography}{10}
\bibitem{C} Cao, H.-D.: \textit{Recent progress on Ricci
solitons}. In: Recent advances in geometric analysis, Adv. Lect. Math. (ALM), \textbf{11}, pp. 1--38. Int. Press, Somerville, MA, 2010

\bibitem{CZ}Cao, H.-D. and Zhou, D.: \textit{On complete gradient shrinking Ricci solitons}. J. Differential Geom. \textbf{85},
175--186 (2010)

\bibitem{CHL}Chang, S.-C., Han, Y., Lin, C.: \textit{On the three-circle theorem and its applications in Sasakian manifolds}. Calc. Var. 58, 101 (2019)

\bibitem{CT}Cheeger, J., Tian, G.: \textit{On the cone structure at infinity of Ricci flat manifolds with Euclidean volume
growth and quadratic curvature decay}. Invent. Math. \textbf{118}(1), 493--571 (1994)

\bibitem{C} Chen, B.-L.: \textit{Strong uniqueness of the Ricci flow}. J. Differential Geom. \textbf{82}, 363--382 (2009)

\bibitem{CM}Colding, T., Minicozzi II ,W.: \textit{Harmonic functions with polynomial growth}. J. Differential Geom. \textbf{45}, 1-77 (1997)

\bibitem{EMT} Enders, J., Muller, R., Topping, P.: \textit{On type-I singularities in Ricci flow}. Comm. Anal. Geom. \textbf{19}, no. 5, 905--922 (2011)

\bibitem{H}Hamilton, R.: \textit{The formation of singularities in the Ricci flow}. In: Surveys in differential geometry II. pp. 7--136. International Press Boston (1993)

\bibitem{L}Liu, G.: \textit{Three-circle theorem and dimension estimate for holomorphic functions on
K\"{a}hler manifolds}. Duke Math. J. \textbf{165}, no. 15, 2899--2919 (2016)


\bibitem{MO}Mai W, Ou J. \textit{Liouville Theorem on Ricci shrinkers with constant scalar curvature and its application}. arXiv: 2208.07101

\bibitem{O}Ou, J: \textit{Three circle theorems for eigenfunctions on complete shrinking gradient Ricci solitons with constant scalar curvature}. J. Math. Anal. Appl. \textbf{464}, 1243--1259 (2018)

\bibitem{P}Petersen, P.: \textit{Riemannian Geometry}. Graduate Texts in Mathematics, vol. \textbf{171}, 3rd edn.
Springer, New York (2016)

\bibitem{PW}Protter, P. W., Weinberger, H. F.: \textit{Maximum principle in differential equations}. Springer-Verlag New York (1984)

\bibitem{S}Simon, L.: \textit{Asymptotics for a class of nonlinear evolution equations, with applications to geometric
problems}. Ann. Math. \textbf{118}(2), 3, 525--571 (1983)

\bibitem{MW}Munteanu, O., Wang, J.: \textit{Holomorphic functions on K\"{a}hler-Ricci solitons}. J. London Math. Soc. \textbf{89}, 817--831 (2014)

\bibitem{X}Xu, G: \textit{Three circles theorems for harmonic functions}. Math. Ann. \textbf{366}, 1281--1317 (2016)

\bibitem{YZ}Yu, C., Zhang, C.: \textit{Three circle theorem on almost Hermitian manifolds and applications}. Calc. Var. \textbf{61}, 184 (2022)


\bibitem{Z} Zhang, Z.-H.: \textit{A gap theorem of four-dimensional gradient shrinking solitons}. Comm. Anal. Geom. \textbf{28}, no 3, 729--742 (2020)
\end{thebibliography}

\end{document}